\newenvironment{proofof}[1]{{ \em \bf Proof of #1}}{$\hfill \Box$ \smallskip} 
\newcommand \eps {\varepsilon}
\def\R{{\mathbb{R}}}
\def\eg{{\em {e.g.}~}}
\def\etc{{\em {etc}}}
\def\etal{{\em {et al.}~}}
\def\eps{{\varepsilon}}
 \journalname{HAL}
\begin{document}

\title{Transportation Distances on the Circle and Applications
\thanks{This work has been realized under grant BLAN07-2\_183172.}
}

\author{Julie~Delon \and  Julien~Rabin \and Yann~Gousseau}

\titlerunning{Transportation Distances on the Circle and Applications} 
\authorrunning{Delon, Rabin and Gousseau} 

\institute{ Julie Delon and Yann Gousseau \at
           CNRS LTCI T\'{e}l\'{e}com ParisTech, 46 rue Barrault, 75634 Paris Cedex 13 \\
            Tel.: +33-145-817-073\\
            Fax: +33-145-813-794\\
            \email{julie.delon@telecom-paristech.fr and yann.gousseau@telecom-paristech.fr} 
           \and
           Julien Rabin \at
           CNRS C\'{E}R\'{E}MADE Université Dauphine, Place du Maréchal De Lattre De Tassigny, 75775 Paris Cedex 16 \\
            Tel.: +33-144-054-923\\
            Fax: +33-144-054-599\\
            \email{julien.rabin@ceremade.dauphine.fr}  
}

\date{Received: date / Accepted: date}

\maketitle

\begin{abstract}
This paper is devoted to the study of the Monge-Kantorovich theory of optimal mass transport and its applications, in the special case of \emph{one-dimensional and circular distributions}.
More precisely, we study the Monge-Kantorovich distances
between discrete sets of points on the unit circle $S^1$, in the case where
the ground distance between two points $x$ and $y$ is defined as
$h(d(x,y))$, where $d$ is the geodesic distance on the circle and $h$ a
convex and increasing function.
 We first prove that computing a Monge-Kantorovich distance between two given sets of pairwise different points boils down to cut the circle at a well chosen point and to compute the same distance on the real line. 
This result is then used to obtain a metric between 1D and circular discrete histograms, which can be computed in linear time. 
A particular case of this formula has already been used
in~\cite{ref:rabin09siam} for the matching of local features between images, involving circular histograms of gradient orientations.
In this paper, other applications are investigated, in particular dealing with
the hue component of color images.
In a last part, a study is conducted to compare the advantages and drawbacks
of transportation distances relying on convex or concave cost functions, and of the classical $L^1$ distance.

\keywords{Optimal mass transportation theory \and Earth Mover's Distance \and Circular histograms \and Image retrieval}
 \PACS{PACS code1 \and PACS code2 \and more}
 \subclass{MSC code1 \and MSC code2 \and more}

\end{abstract}

\section{Introduction}\label{sec:intro}

The theory of optimal transportation was first introduced by 
Monge~\cite{monge} in its \emph{Mémoire sur la théorie des déblais et 
des remblais} (1781) and rediscovered by Kantorovich~\cite{Kantorovich} 
in the late '30s.  The Monge-Kantorovich problem can be described in the 
following way. Given two probability distributions $f$ and $g$ on $X$ and
$c$ a nonnegative measurable cost function on $X \times X$, the aim is to find the optimal transportation cost
\begin{equation}\label{eq:def_MongeKantorovich}
\textsc{MK}_c (f,g):= \inf_{\pi \in \Pi(f,g)} \iint_{x,y} c(x,y)\, 
d\pi(x,y)  \, \;
\end{equation}
where $\Pi(f,g)$ is the set of probability measures on  $X \times X$ 
with marginals $f$ and $g$ (such measures are called transportation 
plans). The existence, uniqueness and behavior of optimal transportation 
plans has been thoroughly studied in the last 
decades\cite{ref:villani03book,Brenier_book,mccann1999est,villani2008optimal,mccann2,mccann3}. 

This framework is nowadays widely used in many fields of research, such as cosmology~\cite{reconstruction_universe}, meteorology~\cite{Cullen_book}, fluid mechanics or electromagnetic (see~\cite{Brenier_book} for a complete review).

The use of the Monge-Kantorovich framework in image processing and computer vision
has been popularized by Rubner \etal~\cite{rubner_ijcv} for image retrieval
and texture classification with the introduction of the so-called Earth
Mover's Distance (EMD). Although the definition of the EMD is
slightly different from the original Monge-Kantorovich formulation, these are
equivalent when considering distributions having the same total weight. In the following years and up to now,
a large body of works has relied on the use of such distances for image
retrieval, see e.g.~\cite{EMD_image_retrieval_blobworld,Lv_retrieval,EMD_image_retrieval_semantic,EMD_image_retrieval_semantic2,Hurtut08,Pele_iccv2009}. This extensive use of transportation distances is largely
due to their robustness when comparing histograms or discrete
distances. For the same reason, these distances are also successfully used to compare local
features between images,
see~\cite{ling_EMD,pele2008eccv,ref:rabin08icprcemd,ref:rabin09siam,Pele_iccv2009}. Other uses of transportation
distances for images include: image registration~\cite{Tannenbaum04}, image
morphing~\cite{Tannenbaum07} or junction detection~\cite{tomasi_PAMI01}. 

The strongest limitation of transportation distances is their computational
cost. Standard approaches quickly become intractable when dealing with a large amount of data in dimensions
more than two. Indeed, the simplex algorithm, interior point methods or the
Hungarian algorithm all have  a complexity of at least $0(N^3)$ ($N$ being the
size of the data, either the number of samples or the number of histogram bins). Therefore, several works have proposed to speed up the
computation or the approximation of optimal transport, in particular in the
field of image processing, where the amount of data is often massive,
see~\cite{image_retrieval_EMD_embedding,fast_contour_matching_EMD_embedding,ling_EMD,EMD_approx_wavelet}. One particular case in which the computation is elementary and fast is the case of
\emph{one-dimensional histograms}, for which it is well known that optimal transport,
in the case of a \emph{convex cost function}, reduces to the pointwise difference between
cumulative distribution functions~\cite{ref:villani03book}. A question that arises is
then the possibility to perform such simple and efficient computations in
the case of {\it circular} histograms, i.e. histograms in which the first and
last bins are neighbors.

Indeed, circular histograms are especially important in image processing and
computer vision. First, the local geometry is often efficiently coded by the distribution
of gradient orientations. Such representations offer the advantage of being
robust to various perturbations, including noise and illumination
changes. This is in particular the case for the well known SIFT~\cite{SIFT}
descriptor and its numerous variants. In such a situation, the comparison of local
features reduces to the comparison of one-dimensional \emph{circular}
histograms. Other local features involving circular histograms include the
so-called Shape Context~\cite{shape_context}. Second, the color content of an image is
often efficiently accounted for by its \emph{hue}, in color spaces such as HSV or
LCH. In such cases again, information is coded in the form of circular
one-dimensional histograms. Several works in the field of computer vision
have explicitly addressed the use of transportation distances in the case of
circular histograms, either using thresholded concave cost
functions~\cite{pele2008eccv,Pele_iccv2009} or $L^1$ cost
functions~\cite{ref:rabin08icprcemd,ref:rabin09siam}. 

The goal of this paper is first to give a general formulation of
transportation distances when the cost function is a convex function of the
Euclidean distance on the circle. This formulation gives a practical way to
compute distances in linear time in this case. Second, we provide various
experiments of image manipulation or retrieval for which the interest of
circular transportation distances is shown. Eventually, we conclude with a
discussion (that actually applies to both circular and non-circular cases) on the
respective interest of transportation distances with either convex or concave
cost functions when compared to classical bin-to-bin distances. It is shown
that the choice between these three family of distances should essentially be
driven by  the type of perturbation the histograms are likely to suffer from. 

\paragraph{\bf \em Outline}\label{sec:outline}

The paper is organized as follows. In
Section~\ref{sec:transportation_on_circle}, the optimal transportation flow of
the Monge-Kantorovich problem is investigated in the circular case. The
definition of this problem being recalled, a new formula is introduced and a
sketch of the proof is proposed (details of the proof are provided in
appendix~\ref{sec:annexe}). In section~\ref{sec:applications}, several
applications are studied to show the interest of such a metric for image processing and computer vision.
First, an application to hue transfer between images is proposed in
\S~\ref{sec:hue_transfer} as a result of an optimal transportation flow on the circle.
Then, in \S~\ref{sec:hist_comparison}, some applications of histogram
comparison for image retrieval are proposed. Eventually, the discussion
about the robustness and the limitations of Monge-Kantorovich distances in the
framework of histogram comparison is given in Section~\ref{sec:comparative_study}.

\section{The Monge-Kantorovich transportation problem on the circle}\label{sec:transportation_on_circle}

In this section, we present some results on the Monge-Kantorovich distances between two circular histograms. In particular, we give an analytic formulation of these distances when the ground cost between points on the circle can be written as an increasing and  convex function of the Euclidean distance along the circle.

\subsection{Definitions}\label{sec:definitions}

Consider two discrete and positive distributions 
\begin{equation}
  \label{eq:discrete_histograms}
  f=\sum_{i=1}^{N} f[i]\delta_{x_i} \;\;\text{ and }\; \; g=\sum_{i=1}^{M} g[j]\delta_{y_j},
\end{equation}
where $\{x_1,\dots x_N\}$ and $\{y_1,\dots y_M\}$ are 
two sets of points on a subset $\Omega$ of $\R^K$.
Assume that these distributions are normalized in the sense that $\sum_{i=1}^{N} f[i]=\sum_{j=}^{M} g[j]=1$. Let $c : \Omega \times \Omega \mapsto \R^+$ be a nonnegative cost function (called \textit{ground cost}), the quantity 
\begin{equation}
  \label{eq:monge-kantorovich_c}
  \textsc{MK}_{c}\,(f,g):=\min_{(\alpha_{i,j}) \in \mathcal{M}}  \sum_{i=1}^N \sum_{j=1}^M \alpha_{i,j} c({\textstyle x_i, y_j}), \text{ with }\;\;
\end{equation}
\begin{equation}
\label{eq:matrix}
\mathcal{M}=\{(\alpha_{i,j}) \in \R^N \times \R^M;\; \alpha_{i,j} \geq 0,\; \sum_i \alpha_{i,j}= g[j],\; \sum_j \alpha_{i,j}=f[i]\},
\end{equation}
is called the \textit{optimal transportation cost} between $f$ and $g$ for the ground cost $c$.
Matrices $(\alpha_{i,j})$ in $\mathcal{M}$ are called\textit{ transport plans} between $f$ and $g$. If  $(\alpha_{i,j})$ is optimal for~\eqref{eq:monge-kantorovich_c}, we say that $(\alpha_{i,j})$ is an \textit{optimal transport plan}. 

Let $d$ be a distance on  $\Omega$ and assume that the ground cost can be written $c(x,y) = d(x,y)^{\lambda}$, with the convention $d(x,y)^0 = \mathbf{1}_{x \neq y}$.  It can be shown~\cite{ref:villani03book} that
\begin{itemize}
\renewcommand{\labelitemi}{$\bullet$}
\item[$\bullet$] when  $\lambda \in [0,1[ $,  $\textsc{MK}_c$ is a distance between probability distributions ;
\item[$\bullet$] when $\lambda \in [1,\infty[$,
  $ \textsc{MK}_{\lambda}\,(f,g):=\left(\textsc{MK}_c(f,g)\right)^{\frac{1}{\lambda}} $
is also a distance between probability distributions.
\end{itemize}
These distances are called \textit{Monge-Kantorovich distances}, or Wasserstein distances. For $\lambda=1$, $\textsc{MK}_1$ is also known as the Kantorovich-Rubinstein distance, or in computer vision as the \textit{Earth Mover's Distance} (EMD), as introduced by Rubner in~\cite{rubner_ijcv}. 

Computing optimal transportation costs is generally not an easy task. The main exception is the case of the real line: if $\Omega = \R$, and if the cost $c$ is  a convex and increasing function of the euclidean distance $|x-y|$, then the optimal transport plan between $f$ and $g$ is the monotone rearrangement of $f$ onto $g$, which sends the mass starting from the left. This result is usually false if $c$ is not a convex function of the euclidean distance on the line.

In the following, we take interest in the case where $\Omega$ is a circle $S^1$ of perimeter 1, and where $c$ is an increasing function of the geodesic distance $d$  along the circle.
In particular, we will see that the previous result on the line can be generalized in this case.

\subsection{Optimal transportation for convex functions of the distance }\label{sec:computing_Monge_circle}
The main result of this section is an analytic formulation of the optimal transportation cost between the discrete distributions $f$ and $g$ on the circle $S^1$ when the ground cost $c$ can be written $c(x,y) = h(d(x,y))$, with $h:\R \to \R^+$ an increasing and convex function and $d$ the geodesic distance along the circle.  In the following, we use the same notations for points on the circle and their coordinates along the circle, regarded as variables taking their values on the reduced interval $[0,1[\;(modulo\,1)$. It follows that $d$ can be written
\begin{equation}
  \label{eq:distance}
d(x,y)=\min(|x-y|, 1-|x-y|).
\end{equation}
The distributions $f$ and $g$ on $S^1$ can be seen equivalently as periodic distributions of period $1$ on $\mathbb{R}$. 

Let us define the cumulative distribution function of $f$  on $[0,1[$ as
\begin{equation}
  \label{eq:cumulative}
 \forall y \in [0,1[,\;\;  F(y)=  \sum_{i=1}^N  f[i] \cdot \mathbb{1}_{\{ x_i \in [0,y[ \}} \;.
\end{equation}
$F$ is increasing and left continuous, and can be extended on the whole real line with the convention $F(y+1) = F(y) +1$. This boils down to consider $f$ as a periodic distribution on $\R$. We define also the pseudo-inverse  of $F$ as $F^{-1}(y)=\inf \{t;\; F(t)>y\}.$ The interest of these definitions lies in the next result.

\subsubsection{An analytic formulation of optimal transportation on the circle}

\begin{theorem}
  \label{theoremMK}
\textit{
Assume that $d$ is given by Equation~\eqref{eq:distance} and that the ground cost $c$ can be written $c(x,y) = h(d(x,y))$, with $h:\R \to \R^+$ an increasing and convex function. Let $f$ and $g$ be two discrete probability distributions on the circle, with cumulative distribution functions $F$ and $G$, and let  $G^{\alpha}$ denote the function $G - \alpha$. Then
\begin{equation}
  \label{eq:MKresult}
  \textsc{MK}_{c}\,(f,g)=\inf_{\alpha \in \R} \int_0^1 h( |F^{-1} - (G^{\alpha})^{-1} |) \;.
\end{equation}
}
\end{theorem}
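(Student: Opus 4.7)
The plan is to establish both inequalities in~\eqref{eq:MKresult}. For the upper bound, I note that for every $\alpha \in \R$, the identity $(G^\alpha)^{-1}(t) = G^{-1}(t+\alpha)$ (treating $G$ as its periodic extension on $\R$) makes the map $t \mapsto (F^{-1}(t), (G^\alpha)^{-1}(t) \bmod 1)$, $t \in [0,1)$, a valid transport plan between $f$ and $g$ on the circle. Since the geodesic distance on $S^1$ satisfies $d(x,y) \leq |x-y|$ for any lifts in $\R$ and $h$ is increasing, the cost of this plan is at most $\int_0^1 h(|F^{-1}(t) - (G^\alpha)^{-1}(t)|)\,dt$. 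Taking the infimum over $\alpha$ yields $\textsc{MK}_c(f,g) \leq \inf_\alpha \int_0^1 h(|F^{-1} - (G^\alpha)^{-1}|)$.

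For the reverse inequality, I would pursue the ``cut point'' strategy mentioned in the abstract. The key intermediate claim is that some optimal transport plan admits a point $x_0 \in S^1$ across which no transported mass flows. Once such $x_0$ is identified, the circle unrolls at $x_0$ into the segment $[x_0, x_0+1]$, and the problem reduces to a classical one-dimensional optimal transport between $f$ and $g$ (viewed on this segment) for the convex cost $h(|\cdot|)$. By the line result recalled in Section~\ref{sec:definitions}, this 1D optimum is attained by the monotone rearrangement; after a change of variable reflecting the shift $\alpha^* = G(x_0) - F(x_0)$, its cost rewrites as $\int_0^1 h(|F^{-1}(t) - (G^{\alpha^*})^{-1}(t)|)\,dt$. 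Since no mass crosses $x_0$, every displacement in this matching stays within $[0,1/2]$, so Euclidean and geodesic distances coincide and the integral equals the circular cost of the restricted plan, giving $\textsc{MK}_c(f,g) \geq \inf_\alpha \int_0^1 h(|F^{-1} - (G^\alpha)^{-1}|)$.

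The main obstacle lies in establishing the existence of the cut point $x_0$, where convexity of $h$ is essential. I would proceed by contradiction using an exchange argument: associate to any optimal plan the signed flow $\Phi(x)$ through each $x \in S^1$, i.e.\ the net mass traveling past $x$ in a fixed orientation. If $\Phi$ vanished nowhere, one could extract a non-trivial cyclic sub-flow corresponding to mass that effectively wraps around $S^1$; routing this sub-flow the other way around the circle strictly decreases the total cost by the convexity of $h$ (each individual displacement strictly shrinks), contradicting optimality. Making this exchange rigorous in the discrete setting---identifying which pairs $(x_i, y_j)$ with $\alpha_{i,j}^* > 0$ participate in the cancellable cycle and quantifying the resulting cost reduction---is the technical heart of the argument, and the zero of $\Phi$ so obtained furnishes both the cut point and the optimizing shift $\alpha^*$ in~\eqref{eq:MKresult}.
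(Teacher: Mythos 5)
Your architecture is the right one and matches the paper's: an upper bound from the explicit shifted monotone plan (this half is correct and cleanly argued), and a lower bound obtained by exhibiting a cut point, unrolling the circle there, and invoking the one-dimensional monotone rearrangement. The genuine gap is exactly where you locate it --- the existence of the cut point --- and the sketch you give for that step has three concrete defects. First, a zero of the \emph{net} flow $\Phi$ is not a cut point: mass could cross $x_0$ in both directions with cancelling totals, and then nothing decouples at $x_0$. To identify ``$\Phi(x_0)=0$'' with ``no mass crosses $x_0$'' you must first prove that in an optimal plan two transport paths that intersect have the same orientation (otherwise swapping their targets lowers the cost, since $h$ is increasing); this is the paper's Lemma~\ref{lemma_path}, and it is not free. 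Second, the cancellation mechanism you describe --- ``routing this sub-flow the other way around the circle, each individual displacement strictly shrinks'' --- is false as stated: replacing a geodesic arc of length $\ell\le 1/2$ by its complement gives length $1-\ell\ge\ell$. The correct exchange (Proposition~\ref{prop1}) is a cyclic \emph{reassignment} of targets: if every point of the circle were covered, all paths would share an orientation and each $x_l$ would lie inside $\gamma_{l-1}$, so matching $y_{\sigma(l-1)}$ to $x_l$ instead of $x_{l-1}$ strictly lowers the cost.

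Third, your contradiction needs a \emph{strict} decrease, and that strictness is only available when $h$ is strictly convex (it is what excludes nested same-direction paths in Lemma~\ref{lemma_path}). For merely convex $h$ --- in particular $h(t)=t$, the case most used in the paper --- not every optimal plan admits a cut point: with $f=\tfrac12(\delta_0+\delta_{1/2})$ and $g=\tfrac12(\delta_{1/4}+\delta_{3/4})$, the plan splitting each source half-and-half between the two targets is optimal and its paths cover the whole circle. So the correct statement is that \emph{some} optimal plan cuts, and one must either select it carefully or, as the paper does in Corollary~\ref{corollaire1}, approximate $h$ by strictly convex costs $h^n$ and use the finiteness of $\Sigma_P$ to pass to the limit. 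Your sketch, which argues about an arbitrary optimal plan, cannot succeed without such a device. Finally, you prove nothing beyond the case of $P$ pairwise distinct unit masses; the extension of \eqref{eq:MKresult} to general discrete distributions (coinciding points, arbitrary weights) requires the continuity of both sides in the positions and the masses, which should at least be stated.
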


\paragraph{\bf \em Idea of the proof}
This result is a generalization of the real line case, where it is well known~\cite{ref:villani03book} that the global transportation cost between two probability distributions $f$ and $g$ can be written 
\begin{equation}
  \label{eq:MKresult_real}
  \textsc{MK}_{c}\,(f,g)=\int_0^1 h ( | F^{-1}-G^{-1} |) \;.
\end{equation}
A proof of Equation~\eqref{eq:MKresult} in a continuous setting has been proposed very recently in ~\cite{delon09siap}, where it is shown that this equation holds for any couple of probability distributions. However, this proof involves some complex notions of measure theory which are not needed in the discrete setting.
For the sake of completeness and simplicity, we provide in Appendix~\ref{sec:annexe} a simpler proof of these theorem in the case of discrete distributions. The proof first focus on the case where $f$ and $g$ can be written as sums $f = \frac{1}{P} \sum_{k=1}^P \delta_{x_k}$, and $g = \frac{1}{P} \sum_{k=1}^P \delta_{y_k},$ where $\{x_1,\dots x_P\}$ and $\{y_1,\dots y_P\}$ are discrete sets of points on the unit circle $S¹$. When the points are all pairwise different, we show that the circle can always be ``cut'' at some point, such that computing the optimal transport between $f$ and $g$ boils down to compute an optimal transport between two distributions on the real line (see Figure~\ref{fig:cut_circle}).
This result is proven first for strictly convex functions $h$ and for any optimal transport plan, then for any convex function $h$ and a well chosen optimal plan. Once the problem has been reduced to the real line, Formula~\eqref{eq:MKresult} follows from the fact that the optimal transport on $\R$ is given by the ordering of the points. 
The generalization of this formula to any kind of discrete distribution results from the continuity of the global transport cost $\textsc{MK}_c\,(f,g)$ in the values of the masses and their positions on the circle.

\begin{figure*}[htb]
   \centering
   \includegraphics[height=0.2\textheight]{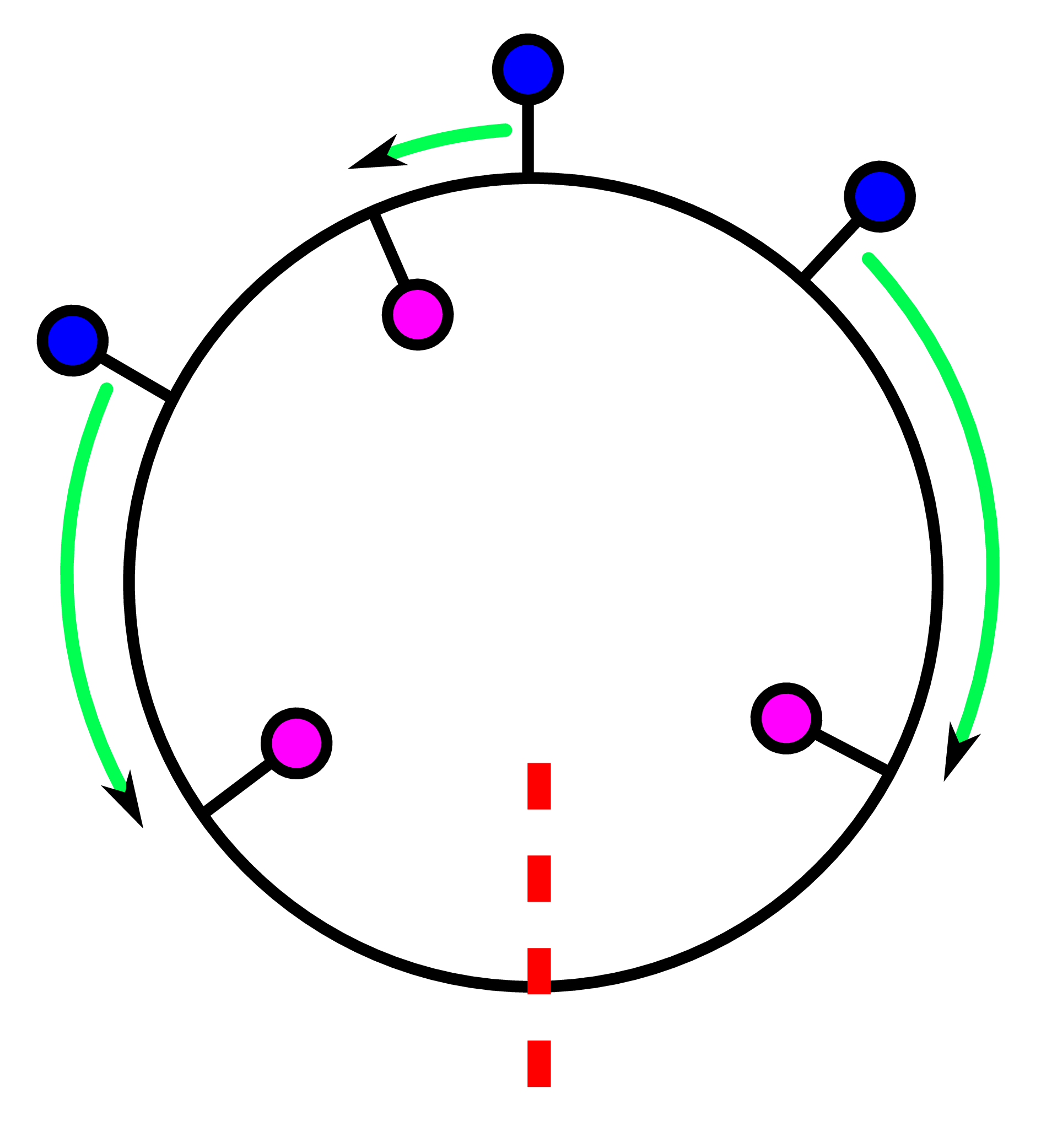} 
   \hspace*{0.05\textwidth}
   \includegraphics[height=0.15\textheight]{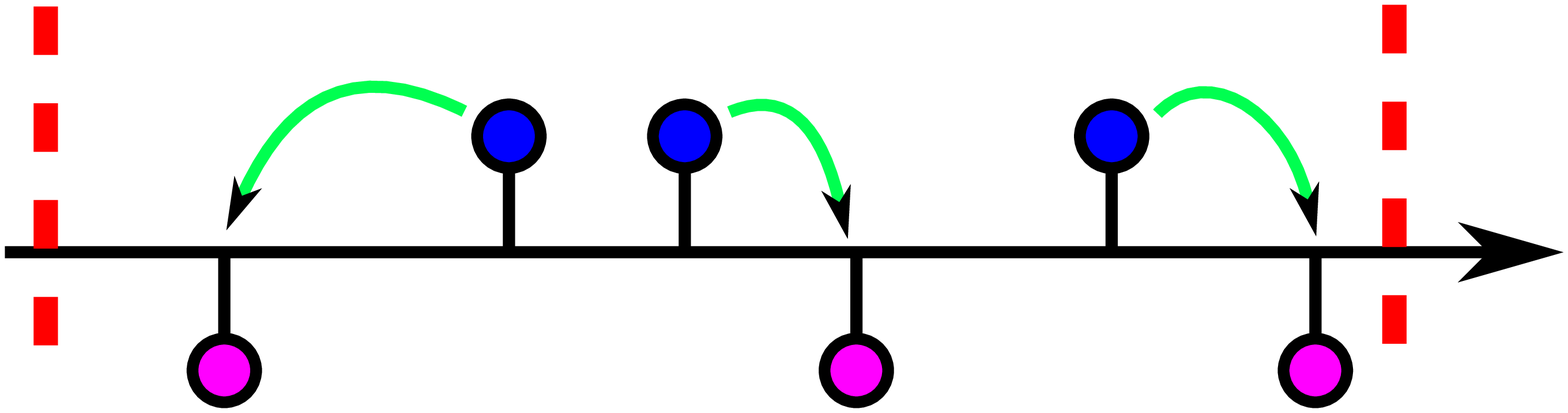}
   \caption{When the distributions are sums of unitary different masses, and when the ground cost is a nonnegative, increasing and convex function of the distance along the circle, there is a (non-necessarily unique) ``cut'' on the circle such that the optimal transportation  on $S^1$ boil down to optimal transportation on the real line.
   }
   \label{fig:cut_circle}
\end{figure*}

In practice, Formula~\eqref{eq:MKresult} can be computed for any $h$  at a precision $\eps$ with a complexity in $O((N+M)\log \frac 1\eps)$~\cite{delon09siap}, where $N$ and $M$ are the number of points in the distributions $f$ and $g$ ({\em i.e.}  the number of masses in the distributions). 

\subsubsection{The case $c(x,y) = d(x,y)$}\label{sec:cemd}

If $h$ is a power function $x \mapsto x^{\lambda}$, with $\lambda \geq 1$, Theorem~\ref{theoremMK} gives us a way to compute Monge-Kantorovich distances between $f$ and $g$:
\begin{equation}
   \label{eq:MKlambdaresult}
  \textsc{MK}_{\lambda}\,(f,g)=\left(\inf_{\alpha \in \R} \int_0^1|F^{-1} - (G^{\alpha})^{-1} |^{\lambda}\right)^{\frac 1 \lambda}.
\end{equation}

In the case $\lambda=1$ ({\em i.e.} when the ground cost $c$ is the distance $d$ along the circle), this result can be rewritten
\begin{equation}
  \label{eq:MKresult1}
  \textsc{MK}_1\,(f,g)=\inf_{\alpha \in \R} \int_0^1|F-G-{\alpha}|.
\end{equation}

Observe that an alternative proof of Equation~\eqref{eq:MKresult1} was proposed by Werman et al. in~\cite{werman1986bgm} for distributions written as sums of unitary masses. A similar result is shown in~\cite{cabrelli1995kantorovich} for the Kantorovich-Rubinstein problem, which is known to be equivalent (see~\cite{ref:villani03book}, chapter 1) to the Monge-Kantorovich problem when the cost $c(x,y)$ is a distance, which is true for $\lambda =1$ (but false for $\lambda >1$).

In practice, since $F-G$ is piecewise constant for discrete distributions, the infimum of Equation~\eqref{eq:MKresult1}  can be computed easily  by computing the weighted median of the (finite number of) values $F(t)-G(t)$ when $t \in [0,1[$, the weights being the lengths of the intervals on which $F-G$ is constant. In practice, this yields a $O(N)$ exact algorithm to compare two normalized distributions of $N$ masses~\cite{cabrelli1998linear,gurwitz1990weighted}. 

\subsubsection{Discrete histograms}

Most applications deal with discrete histograms, {\em i.e.} discrete distributions living on a uniform grid of $N$ bins. In the case of histograms on the real line, for the cost $c(i,j) = |i-j|$, Formula~\eqref{eq:MKresult_real} becomes
\begin{equation}
  \label{eq:emd_bin}
  \text{EMD}\,(f,g) = \textsc{MK}_1(f,g) =   \|F-G\|_1 ,
\end{equation}
where we denote by $\|.\|_1$ the discrete $L^1$ norm, by $F$ and $G$  the cumulative histograms of $f$ and $g$, and where $\text{EMD}$ is the acronym for Earth Mover's Distance~\cite{rubner_ijcv}. An illustration is given in Figure \ref{fig:hist_cumul}.

In the case of circular histograms, bins $0$ and $N-1$ are neighbors. If the cost $c$ is $c(i,j) = \min(|i-j|, N - |i-j|)$ along the circle, 
 Formula~\eqref{eq:MKresult1} can be rewritten
\begin{equation}
  \label{eq:cemd_med}
\textsc{cemd}\,(f,g) := \textsc{MK}_1\,(f,g)=\inf_{\alpha} \sum_{i=0}^{N-1} | F[i]-G[i]-{\alpha}|
  = ||F-G-{\mu}||_1 \,,
\end{equation}
 where $\mu$ is the median of the set of values $\{ F[i]-G[i], 0\leq i\leq N-1
 \}$, and where $\textsc{cemd}$ is the acronym for Circular Earth Mover's
 Distance, as defined in~\cite{ref:rabin09siam}. Indeed, it is easily checked
 that the distance defined by Formula~(\ref{eq:cemd_med}) is equivalent to the distance introduced
 in~\cite{ref:rabin09siam}, that is 
\begin{equation}\label{eq:cemd_bin}
   \textsc{cemd}\,(f,g) =  \min_{k \in \{0,\dots, N-1\}} \|F_k-G_k\|_1,
\end{equation}
 where ${F}_k[i]$ is defined as $F[i]-F[k]$ if $i \in \{k,\dots N-1\}$ and $F[i]-F[k]+1$ if $i \in \{0,\dots k-1\}$ 
  (the definition being similar for ${G}_{k}$ by replacing ${f}$ by
  $g$). 
In other words, the distance $\text{\sc MK}_1(f,g)$ is the minimum in $k$ of
the $L^1$ distance between ${F}_k$ and ${G}_k$, the cumulative histograms of
${f}$ and ${g}$ starting at the $k^{th}$ quantization bin. 

\begin{figure*}[htb]
   \centering
   \includegraphics[height=0.2\textheight]{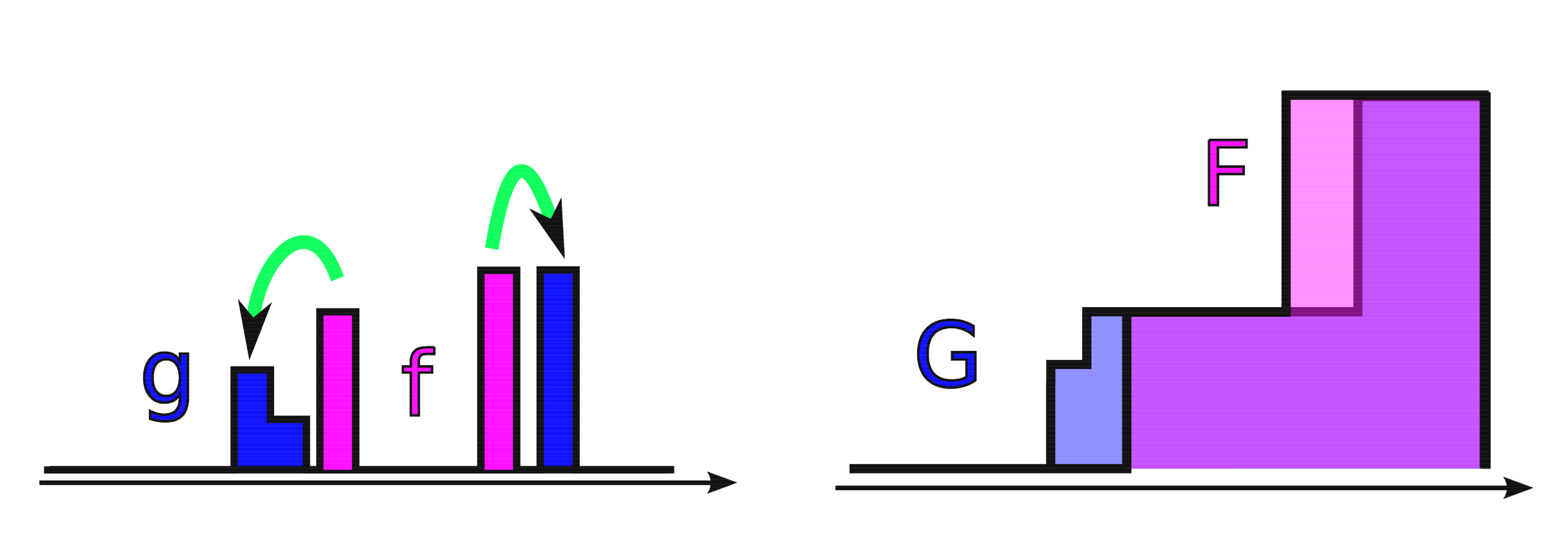}
   \caption{Using $c(x,y) = |x-y|$ on the real line, the optimal transportation plan between two discrete histograms $f$ and $g$ is the $L^1$ distance of the difference between the cumulative histograms ${F}$ and ${G}$ (Formula~\eqref{eq:emd_bin}).}
   \label{fig:hist_cumul}
\end{figure*}

\subsection{Optimal transportation for concave functions of the distance}

In practice, it may be useful to choose the  ground cost $c$ as a nonnegative, concave and increasing function $h$ of the ground distance $d$. For instance, for the task of image retrieval, several authors~\cite{rubner_ijcv,Hurtut08, tomasi_PAMI01}) claim that optimal results can be achieved with a function 
\begin{equation}
  \label{eq:function_exponentielle}
h(t) = 1-e^{-\frac{t}{\tau}}.
\end{equation}
Notice that if $h$ is increasing, concave and such that $h(0) = 0$, it is easy to show that $h(d)$ is also a distance, and thus $\textsc{MK}_c$ is also a distance between probability distributions. Another property  of concave costs is that they do not move the mass which is shared by the distributions~\cite{ref:villani03book}: if $f$ and $g$ are histograms, the problem is reduced to the comparison of $(f-g). \mathbf{1}_{f -g \geq 0}$ with  $(g-f). \mathbf{1}_{f -g < 0}$, which have disjoint supports.

However, in the case of such concave functions $h$,  Theorem~\ref{theoremMK} does not apply, and there is no general and fast algorithm to compute corresponding optimal transportation costs, either on the real line or on the circle. In most cases, we are reduced to use linear programming, {\em i.e.} simplex or interior point algorithms, which are known to have at best a $O(N^3 \log N)$ complexity to compare two histograms on $N$ bins~\cite{burkard-assignements}. 
We describe in the following some special cases of concave function $h$ for which this complexity can be reduced.

\subsubsection{$L^1$ as a Monge-Kantorovich distance}\label{sec:L1_Monge}

If the distributions $f$ and $g$ are discrete histograms on $N$ bins,  and if $h(t) = \mathbf{1}_{t \neq 0}$, then the Monge-Kantorovich distance between $f$ and $g$ is~\cite{ref:villani03book}
\begin{equation}
  \label{eq:MK_indicatrice}
  \textsc{MK}_{\mathbf{1}_{d(x,y) \neq 0}}(f,g) = \frac{1}{2} \sum_{i=1}^N |f[i]- g [i]| = \|f-g\|_1.
\end{equation}
In other words, the $L^1$ distance between two normalized histograms $f$ and $g$ is a Monge-Kantorovich distance for the concave function $\mathbf{1}_{t \neq 0}$.

\subsubsection{Thresholded distances}
\label{sec:thresholded}
In~\cite{pele2008eccv,Pele_iccv2009}, Pele and Werman consider thresholded ground distances, using $h(t) = \min(t,T)$, with $T$ a given threshold. Up to a multiplicative factor, this function $h$ can almost be seen as a discrete version of~\eqref{eq:function_exponentielle}, where $\tau$ is chosen proportional to $T$. They show that in this case, the computation of the optimal cost can be solved by a ``min-cost-flow algorithm'', whose complexity is smaller than classical linear programming algorithms. In their experiments, they use $T=2$ for comparing histograms on $N$ bins, which means that the cost $c(i,j)$ can take only three values: $0$ if $i=j$, $1$ if $i$ and $j$ are neighbors, and $2$ in other cases. Since all the shared mass remains in place, we know that if $(\alpha_{i,j})$ is the optimal transport plan between $f$ and $g$, then for a given $i$, $\sum_{j\neq i}\alpha_{i,j}  = (f[i] - g[i]) \mathbf{1}_{f[i] \geq g[i]}$, which implies in particular that
\begin{eqnarray*}
  \label{eq:thresholded_distances}
\textsc{MK}_{\min(d,2)}(f,g) &=&  \sum_{i=1}^N  ((\sum_{j \neq i } 2 \alpha_{i,j}) - \alpha_{i,i+1} - \alpha_{i,i-1})\\
&=& 2 \sum_{i=1}^N (f[i] - g[i]) \mathbf{1}_{f[i] \geq g[i]} - \sum_{i=1}^N (\alpha_{i,i+1} + \alpha_{i,i-1}) \\
&=& \|f-g\|_1 - \sum_{i=1}^N (\alpha_{i,i+1} + \alpha_{i,i-1}).
\end{eqnarray*} Now, notice that $\alpha_{i,i+1} $ is different from $0$ only
if $f[i] \geq g[i]$ (otherwise, all the mass in $i$ stay in place) and $f[i+1]
< g[i+1]$ (otherwise the mass $g[i+1]$ is already ``filled'' by a part of
$f[i+1]$). In other words, the only points where the quantities
$\alpha_{i,i-1}$ or $\alpha_{i,i+1}$ are different from $0$ are the points
where the densities of $f$ and $g$ are crossing. It follows that in many
cases, the thresholded distance $\textsc{MK}_{\min(d,2)}$ is very close to
$L^1$, in particular when $N$ is large and when histograms are crossing at
only a few places, as we will see in the experiments of
Section~\ref{sec:comparative_study}. In order to allow larger ground
displacements, the use of values of $T$ larger than 2 is proposed
in~\cite{Pele_iccv2009}. This is made at the price of a non-linear complexity
and necessitates a compromise in the tuning of $T$ (smaller values yield
faster computations). We will come back on the use of such concave cost
functions in Section~\ref{sec:comparative_study}.

In the following section, we illustrate the interest of circular
Monge-Kantorovich distances for several applications.

\section{Experimental study}\label{sec:applications}
This section is devoted to an experimental analysis of the previous optimal transportation framework for different computer vision tasks.
We first consider the color transfer from one image to another as an application of the optimal transportation flow on the circle between hue histograms.
In the following, two different applications of transportation distances for the comparison of histograms are studied: local feature comparison and image retrieval.

\subsection{Hue transfer between color images}\label{sec:hue_transfer}
The aim of this paragraph is to use the optimal transportation framework to transfer a hue distribution from one image to another.

\paragraph{\bf \em Contrast transfer}
First, let us recall that histogram equalization and more generally histogram specifications are  merely particular cases of optimal transportation on the real line. Indeed, if $u:\Omega \mapsto \{0,\dots, L-1\}$ is a discrete image and $h_u$ its gray level distribution, histogram specification consists in finding the optimal transport plan between $h_u$ and a target discrete probability distribution $h_t$ on $ \{0,\dots, L-1\}$ (one speaks of histogram equalization when $h_t$ is  a constant distribution on $ \{0,\dots, L-1\}$). If one considers a cost $c$ equal to the euclidean distance on the line, then, as explained in Section~\ref{sec:definitions}, the solution of this problem consists in a monotone rearrangement. This rearrangement is obtained by applying the function $H_t^{-1} \circ H_u$ to $u$, where $H_u$ (resp. $H_t$) is the cumulative distribution function of $h_u$ (resp. $h_t$) and $H_t^{-1}$ represent the pseudo-inverse of $H_t$ (see definition in section~\ref{sec:computing_Monge_circle}, after Formula~\eqref{eq:cumulative}).
 If $u$ is a  color image, such contrast adjustments can be applied to its ``intensity'' channel (\eg the channel ``Value'' in the HSV representation).
 
\paragraph{\bf \em Hue transfer}
Thanks to Formula~\eqref{eq:cemd_med} or ~\eqref{eq:cemd_bin} (Section~\ref{sec:cemd}), one can extend the previous framework to hue distributions, seen as circular distributions. Following Equation~\eqref{eq:cemd_med}, the optimal mapping between the hue distribution $h_u$ of an image $u$ and the target hue distribution $h_t$ is obtained as $(H_t-\alpha)^{-1}\circ H_u$, where $\alpha$ is the median of the values $\{H_u[i] - H_t[i]\}$. Figure~\ref{fig:hue_transfer} illustrate this transfer of hue on a  pair of images. 
For a detailed survey on color transfer between images, we refer the reader to~\cite{fpitie07a}.

\begin{figure*}[htb]
   \begin{center}
  
   \includegraphics[width=\textwidth]{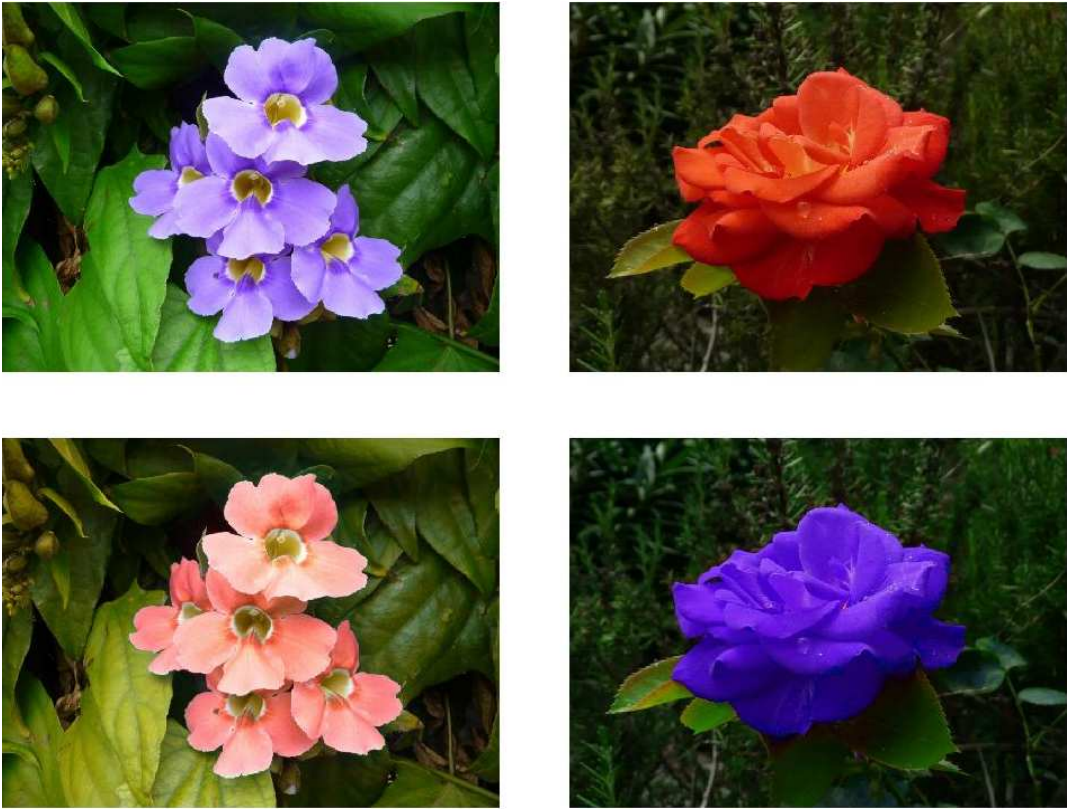}
  \end{center}
  
  \caption[Hue transfer]{\textbf{Hue transfer between images.} \emph{First row:} original images. \emph{Second row:} the hue channel of each image has been modified by applying the circular optimal transportation flow between the hue channels (see text for details), while other channels (``saturation'' and ``value'' in HSV representation) are kept unchanged.} 
  \label{fig:hue_transfer}
\end{figure*}

\subsection{Image and histogram comparison}\label{sec:hist_comparison}


In this section, we investigate the interest  of Monge-Kantorovich distances for histogram comparison.  More precisely, the distance considered in the following experiments is the $\textsc{MK}_1$ distance (given by Equation~\eqref{eq:cemd_med}) on the circle. Following~\cite{ref:rabin09siam}, we refer to it as \textsc{CEMD} (Circular Earth Mover's Distance). This distance is compared in particular to the $L^1$ distance, which can be described as being bin-to-bin, since it compares bins having the same index.

\subsubsection{Local feature comparison}\label{sec:feature}

Many computer vision tasks rely on local features (object recognition, image retrieval, indexation and classification, image mosaicing, \etc).
Some of the most commonly used (and invariant) local features are the Shape Context~\cite{shape_context} and SIFT descriptors~\cite{SIFT}, which both encode periodic data: polar orientation for the former, and gradient orientation for the latter.
For instance, a SIFT descriptor can be seen as a collection of $M$ one-dimensional and circular-histograms
, each one being constructed from a subpart of a localization grid in the image domain~\cite{SIFT}.

In~\cite{ref:rabin09siam}, it is demonstrated that the Circular Earth Mover's Distance (or $\textsc{MK}_1$ distance on the circle) is far more robust than classical bin-to-bin distances ($L^1$ and $L^2$ metric, $\chi^2$ distance, etc) to compare SIFT descriptors. In particular, it is underlined that this cross-bin distance is more adapted to two kinds of perturbations
 \begin{itemize}
 \item[$\bullet$] \textbf{quantization} effects (also know as the ``binning problem''), which result from the small number of samples used to build the discrete histograms of gradient orientations, but also from the localization grid used to extract histograms over the pixel grid;
 \item[$\bullet$] \textbf{shifts} in histograms, resulting from geometrical deformations in the image (\eg perspective effects, which typically arise when an object is seen from different points of view).
 \end{itemize}

Note that this result is consistent with the one presented in~\cite{pele2008eccv}, where the metric used to compare SIFT descriptors is a modification of the Monge-Kantorovich distance with a truncated cost $c$.
A more in depth analysis of the general robustness of Monge-Kantorovich distances for both quantization and shift effects is proposed in section~\ref{sec:comparative_study}.

\subsubsection{Three experiments on color image retrieval}\label{sec:color_retrieval}

For the task of color image retrieval, numerous studies have shown that the Earth Mover's Distance (defined in Section~\ref{sec:definitions}) often achieves better retrieval performances than bin-to-bin distances 
\cite{rubner_ijcv,EMD_image_retrieval_blobworld,tomasi_PAMI01,EMD_image_retrieval_blobworld2,Lv_retrieval,EMD_image_retrieval_semantic,EMD_image_retrieval_semantic2,Hurtut08,Pele_iccv2009}. In order to illustrate the advantages of the Circular Earth Mover's Distance in the same context, we rely on hue distributions to perform image retrieval on a color image dataset.
The dataset\footnote{the image dataset is available at the following address: 
\url{http://perso.telecom-paristech/~rabin/database/}} contains  $14$ category of $9$ pictures of the same object, with various camera settings (sensitivity, with or without flash, white balance reference, exposure time, \etc). Nine pictures of the same category are shown as an example in Figure~\ref{fig:base_photo_flash}.
Each of the $P=14\times 9 = 126$ images of the dataset is described by a hue (channel H of HSV representation) distribution, built on $N=360$ bins. For a given dissimilarity measure $D$, the retrieval experiment consists in using an image of the dataset as a query and finding the $r$ most similar images for $D$. For each value of $r$, we compute 
\begin{enumerate}
\item[$\bullet$] the \textbf{recall}, which is defined as the average, when the query spans the database, of the ratio between the number of correctly retrieved images among $r$ and the size of the query class;
\item[$\bullet$] the \textbf{precision}, which represents the average on the whole database of the rate of true positives among the $r$ most-similar images.
\end{enumerate}
The curves ($r$, recall) and (recall, precision) are drawn on Figure~\ref{fig:ROC_photo_flash}, for three different dissimilarity measures: \textsc{cemd} (Equation~\eqref{eq:cemd_med}), non circular \textsc{emd} (Equation~\eqref{eq:emd_bin}), and the $L^1$ distance.

\begin{figure*}[htb]
  \centering
  \includegraphics[width=\textwidth]{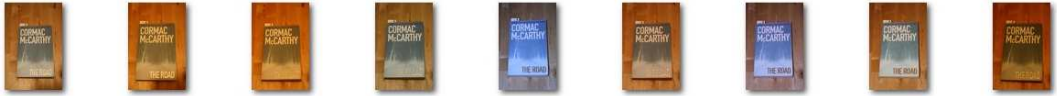}
  \caption{Example of a category of 9 pictures extracted from the image database used for image retrieval (results ate shown in Figure~\ref{fig:ROC_photo_flash}). These photographs represent the same scene under various illumination conditions and camera settings. }
\label{fig:base_photo_flash}
\end{figure*}

\begin{figure*}[htb]
  \centering
  \subfigure[Average {Recall} rate according to the rank]{\label{fig:rappel_cuisine_360}
  \includegraphics[width=.485\textwidth]{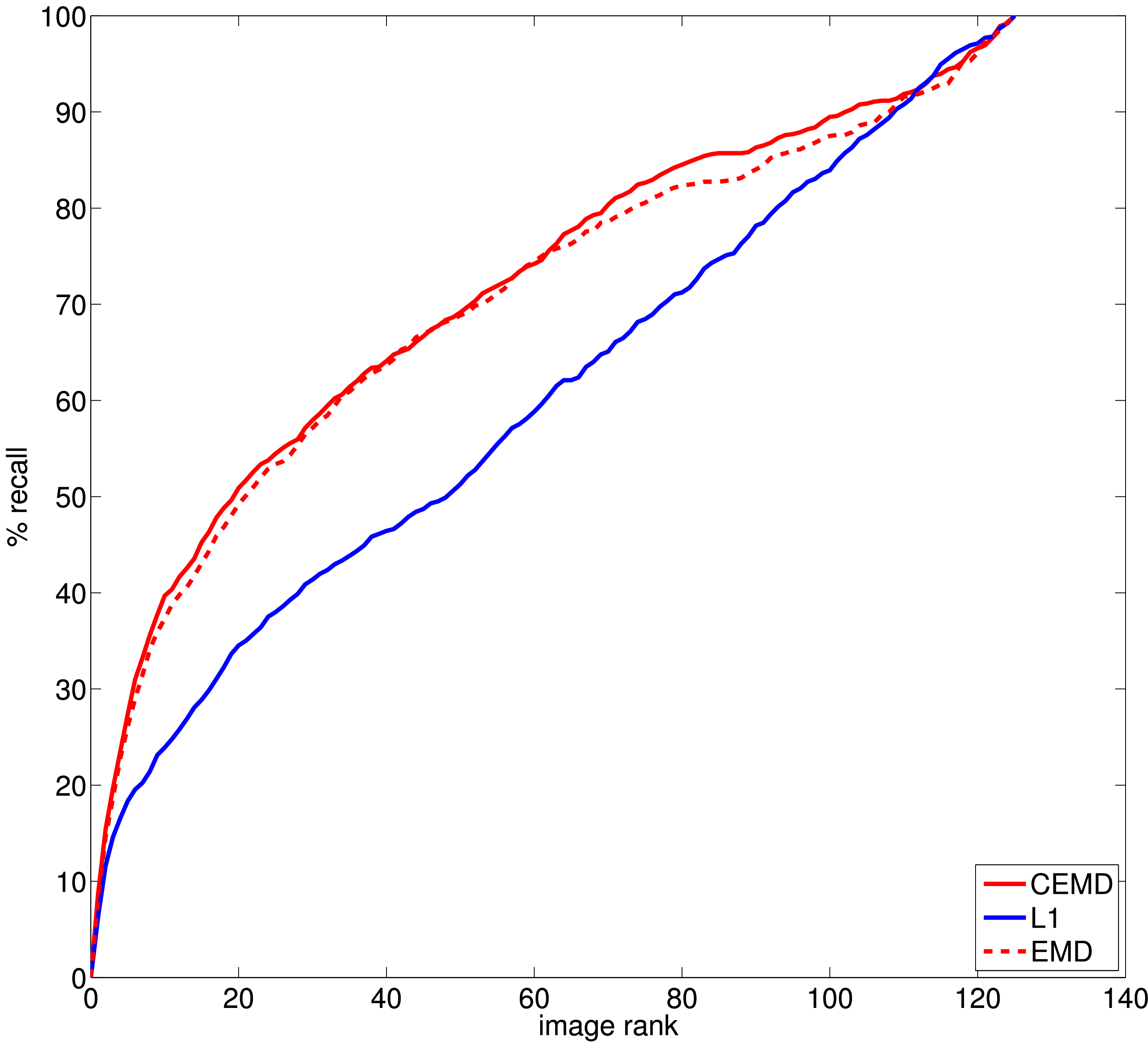} 
  }\hfill%
  \subfigure[Average {precision-recall} curve]{\label{fig:precision_cuisine_360}
  \includegraphics[width=.485\textwidth]{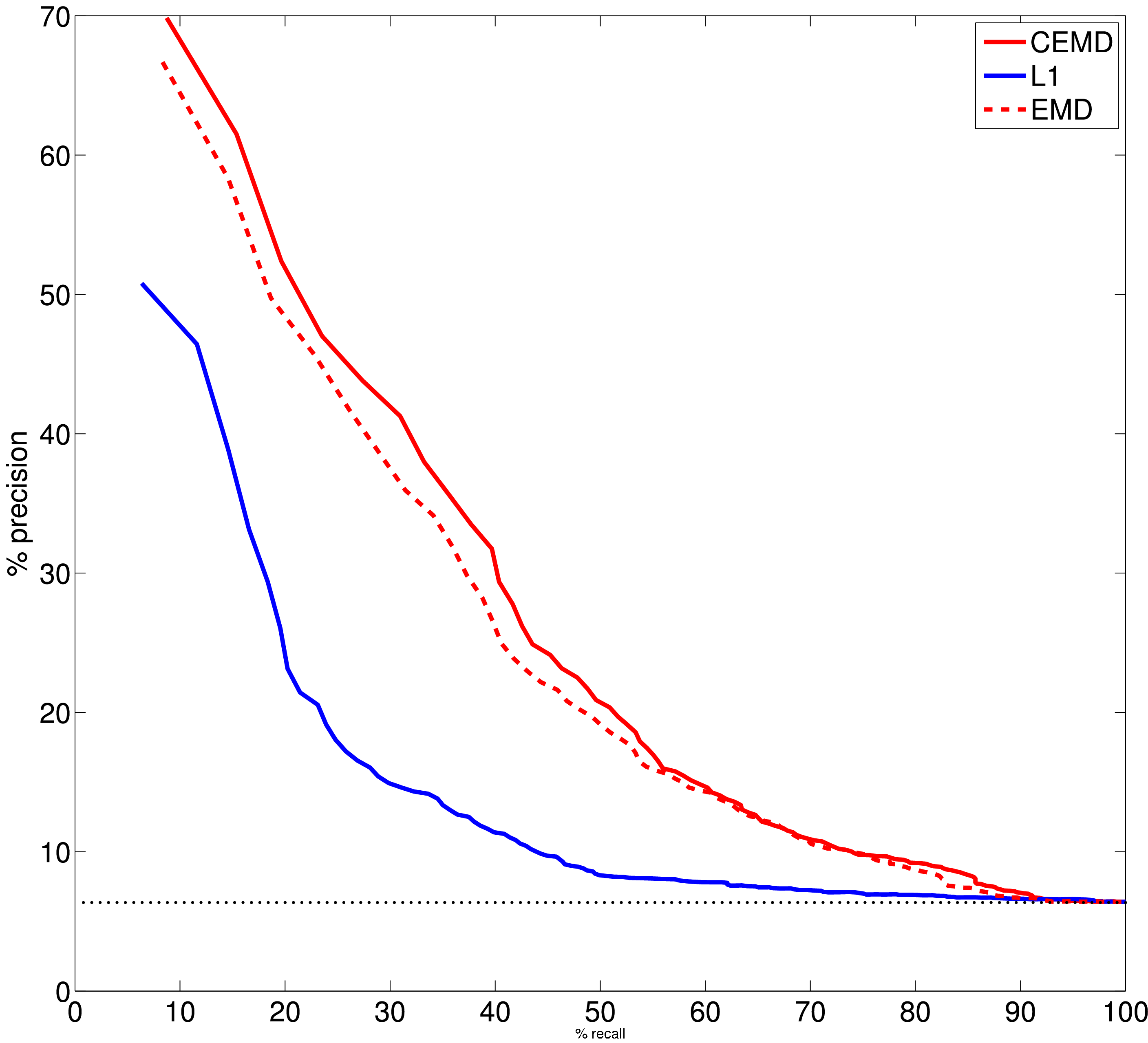}
  }
  
  \caption{\textbf{retrieval of a color image database.}
  The performances curves are obtained using \textsc{cemd} in red continuous line, $L^1$ distance in blue line and also EMD (non circular mass transportation) in red dashed line.}
\label{fig:ROC_photo_flash}
\end{figure*}

In this experiment, the results of \textsc{cemd} (in red continuous line) clearly outperforms those of $L^1$  and EMD (which does not take into account the circularity of hue histograms).
As in the experiments on local feature comparison, one can guess that the
superiority of Monge-Kantorovich distances are due to their natural robustness
to shifts in the distributions. However, observe that this time, in contrast
with the application to local descriptors, a huge number of samples ($\sim5.10^5$) is used to build each circular histogram, so that we avoid the aforementioned binning problem.


\paragraph{\bf \em  Two more color retrieval experiments} In this paragraph, we aim
at showing that two different classes of intraclass variability could arise when
representing data by histograms, and that bin-to-bin and cross-bin distances
behaves very differently according to these perturbations. This fact will then
be discussed in more detail in Section~\ref{sec:comparative_study}.

In order to illustrate these phenomena, a small dataset\footnote{the image dataset is available at the following address: 
\url{http://perso.telecom-paristech/~rabin/database/}} of 22 photographs has been used, shown in Figure~\ref{fig:base_photo_julie}.
Here, we propose to reproduce --in a synthetic way-- the color image retrieval experiment presented in section~\ref{sec:color_retrieval}. 
For each picture of this dataset, synthetic modifications are proceeded in order to simulate two types of perturbations which naturally arise when considering color image retrieval:
\begin{itemize}
\item[$\bullet$] \textbf{Gamma} correction with a power factor varying from 0.6 to 2.4 (this operation has been realized on the ``Luminance'' channel in CIELab color space). 
An example is shown in Figure~\ref{fig:Planche_julie_gamma};

\item[$\bullet$] \textbf{White balance} correction with a ``color temperature'' varying from $4400$ to $6200^\circ K$  
(Example is given in Figure~\ref{fig:Planche_julie_white_balance}).
\end{itemize}

\begin{figure*}[htb]
  \centering
  \includegraphics[width=\textwidth]{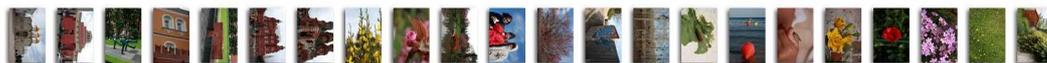}
  \caption{22 pictures used for image retrieval test (results are shown in Figures~\ref{fig:ROC_gamma} and~\ref{fig:ROC_whitebalance}).}
\label{fig:base_photo_julie}
\end{figure*}

Now, applying these modifications to the dataset, we obtain two different databases on which a retrieval is performed using $L^1$ and {\sc cemd} metric (like in section~\ref{sec:color_retrieval}).
Results are shown in Figure~\ref{fig:ROC_gamma} for gamma correction, and in Figure~\ref{fig:ROC_whitebalance} for color temperature correction.


\begin{figure*}[htb]
  \centering
  \subfigure[Example of gamma modification]{\label{fig:Planche_julie_gamma}
   \includegraphics[width=\textwidth]{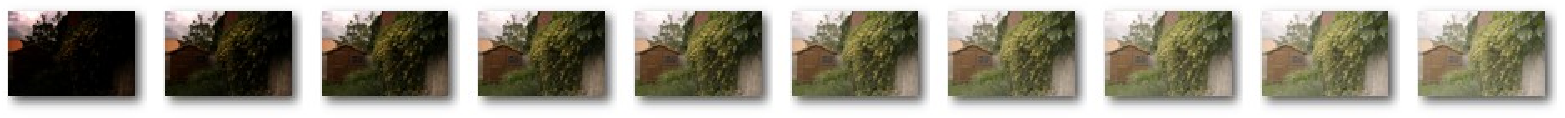}
  }
  
  \centering
  \subfigure[Average recall rate]{\label{fig:rappel_gamma}
   \includegraphics[trim = 0 0 40 0,clip, angle=0,width=.47\textwidth]{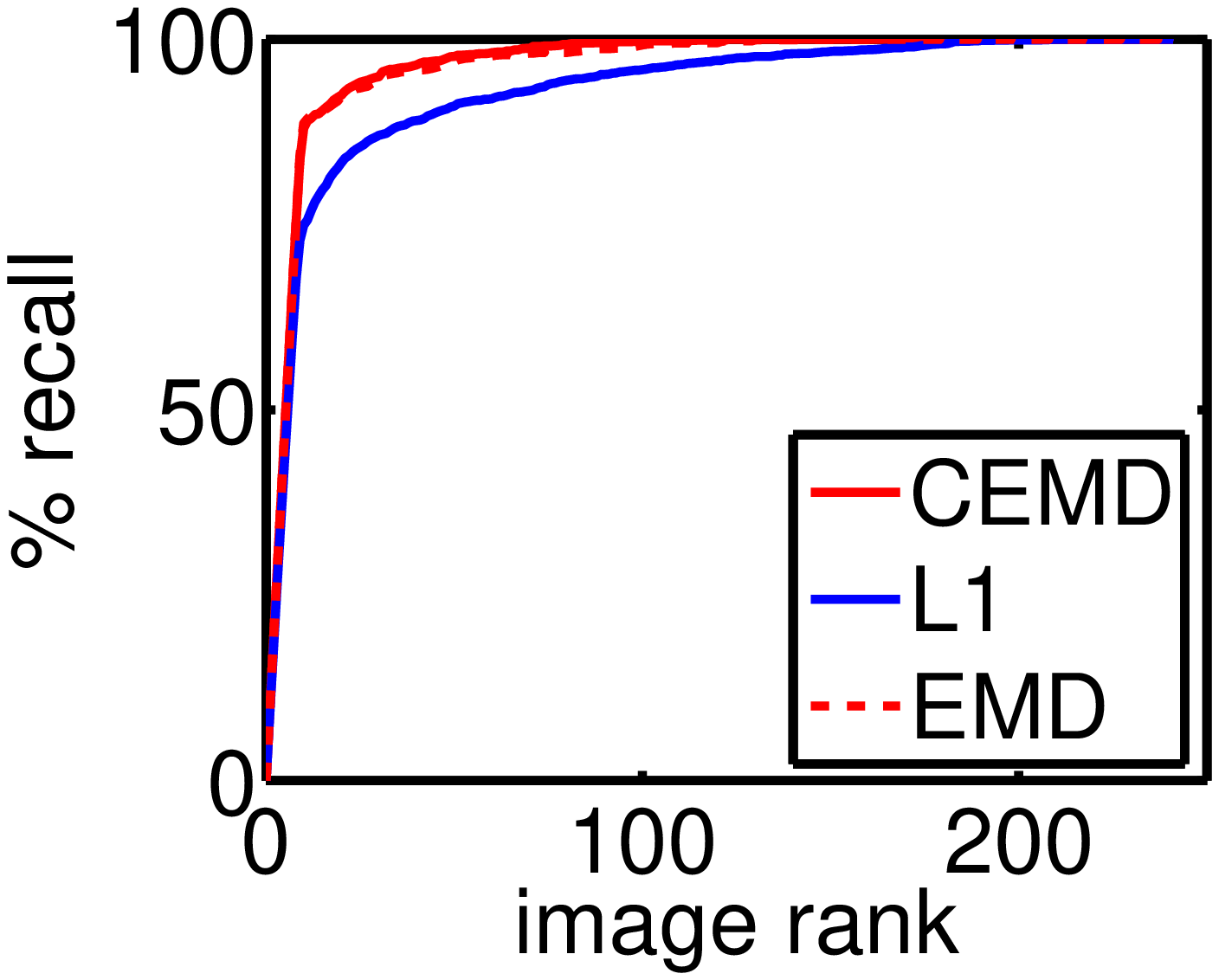}
  }\hfill%
  \subfigure[Precision-recall curve]{\label{fig:precision-rappel_gamma}
   \includegraphics[trim = 0 0 20 0,clip, angle=0,width=.50\textwidth]{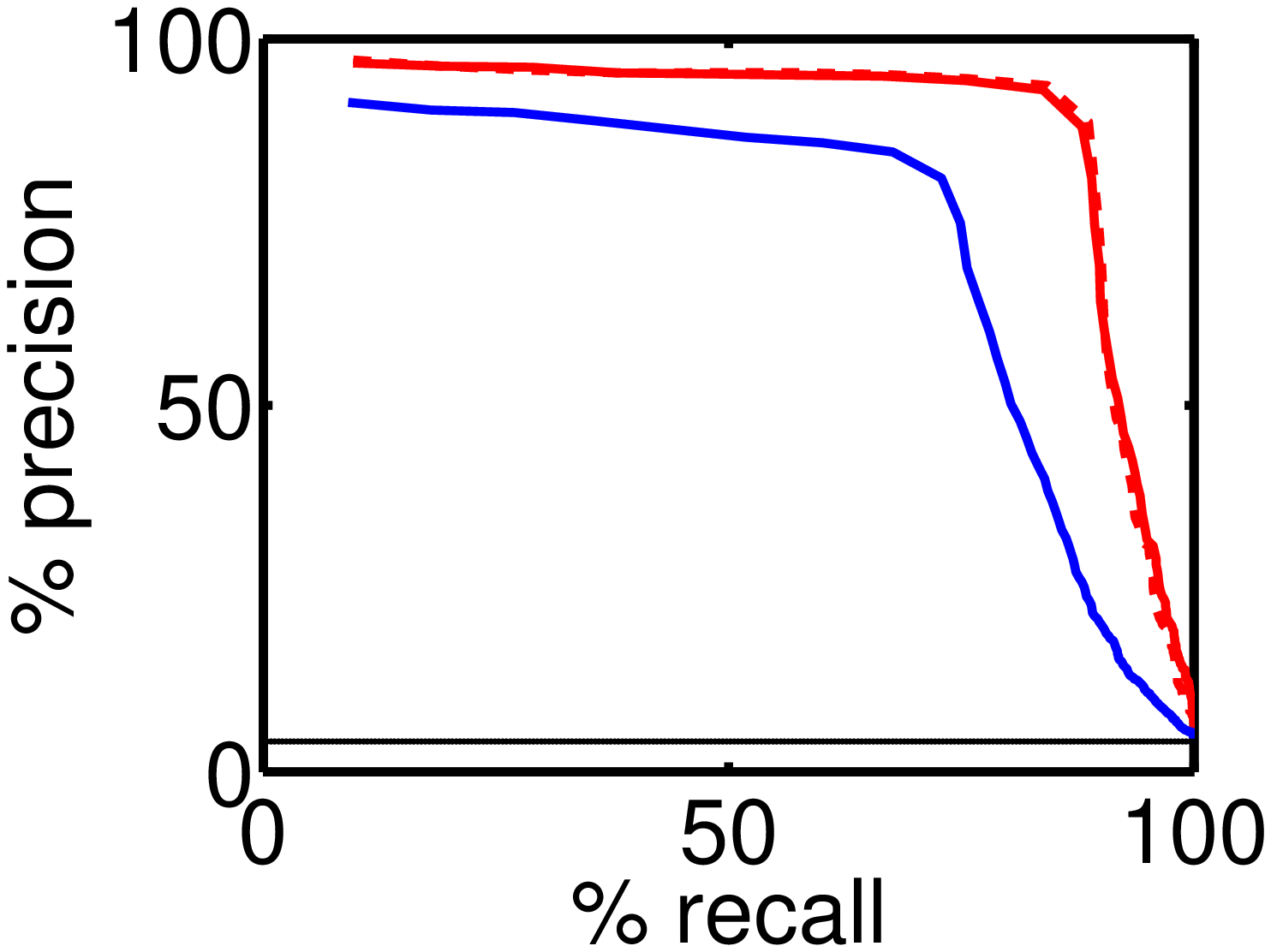}
  }
\caption{Retrieval results with data corrupted by contrast modification (gamma correction).
  }
\label{fig:ROC_gamma}
\end{figure*}

Once again, in the case of gamma correction (Figure~\ref{fig:ROC_gamma}) {\sc cemd} provides in average better retrieval results than $L^1$ distance.
The main reason in such case is that we observe some \textbf{intraclass shifts} between histograms, for which cross-bin distances such as {\sc cemd} are more robust than bin-to-bin distances.

\begin{figure*}[htb]
  \centering
  \subfigure[Example of color temperature modification]{\label{fig:Planche_julie_white_balance}
   \includegraphics[width=\textwidth]{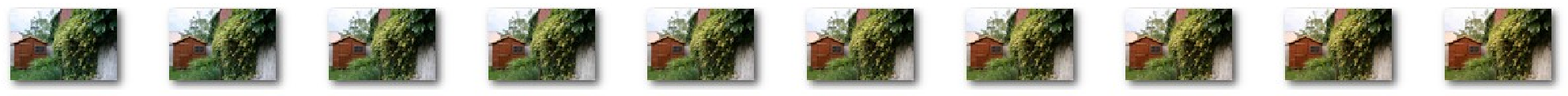}
  }
  
  \subfigure[Average recall rate]{\label{fig:rappel_whitebalance}
  \includegraphics[trim = 0 0 40 0,clip,width=.47\textwidth]{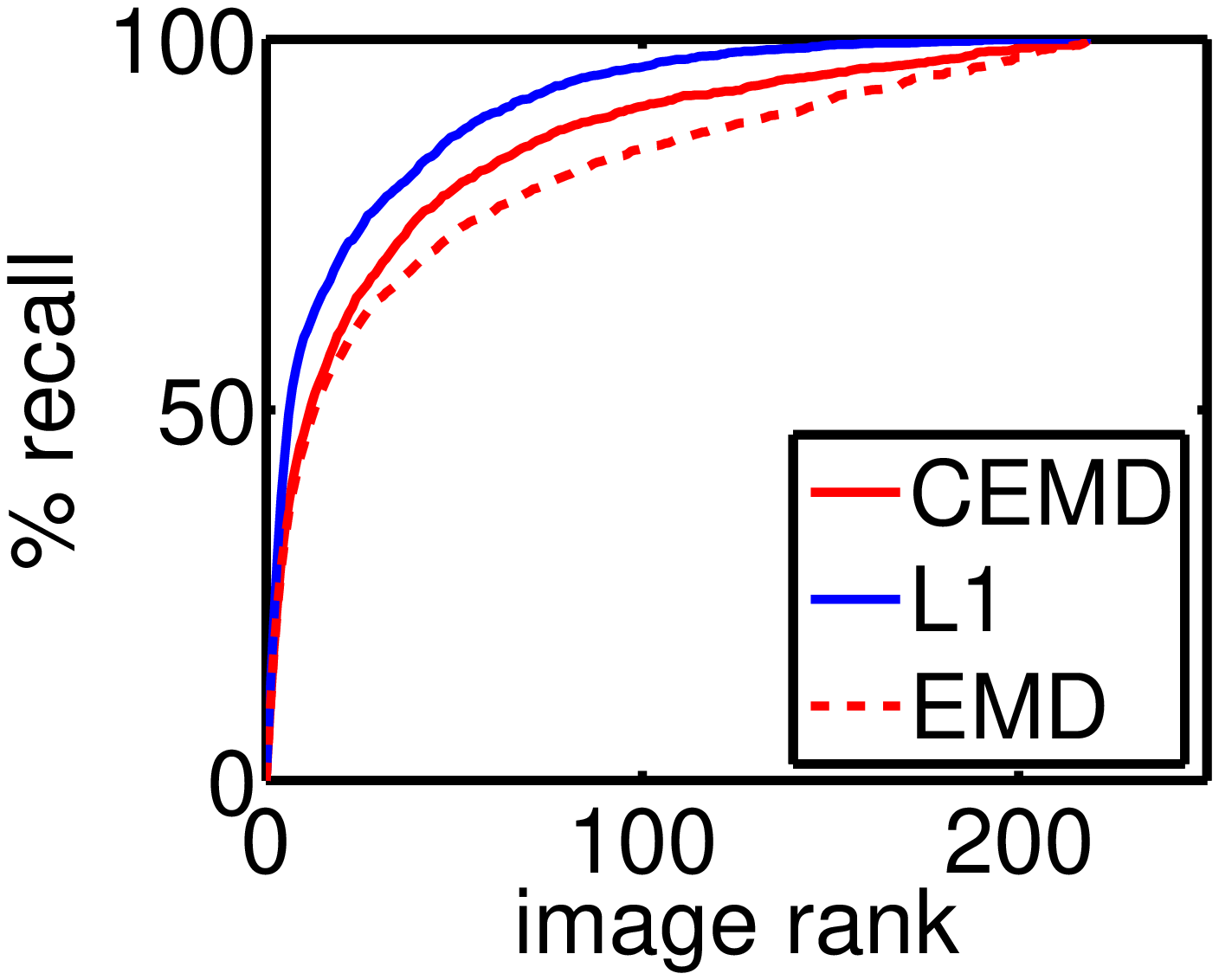}
  }
  \subfigure[Precision-recall curve]{\label{fig:precision-rappel_whitebalance}
  \includegraphics[trim = 0 0 20 0,clip,width=.50\textwidth]{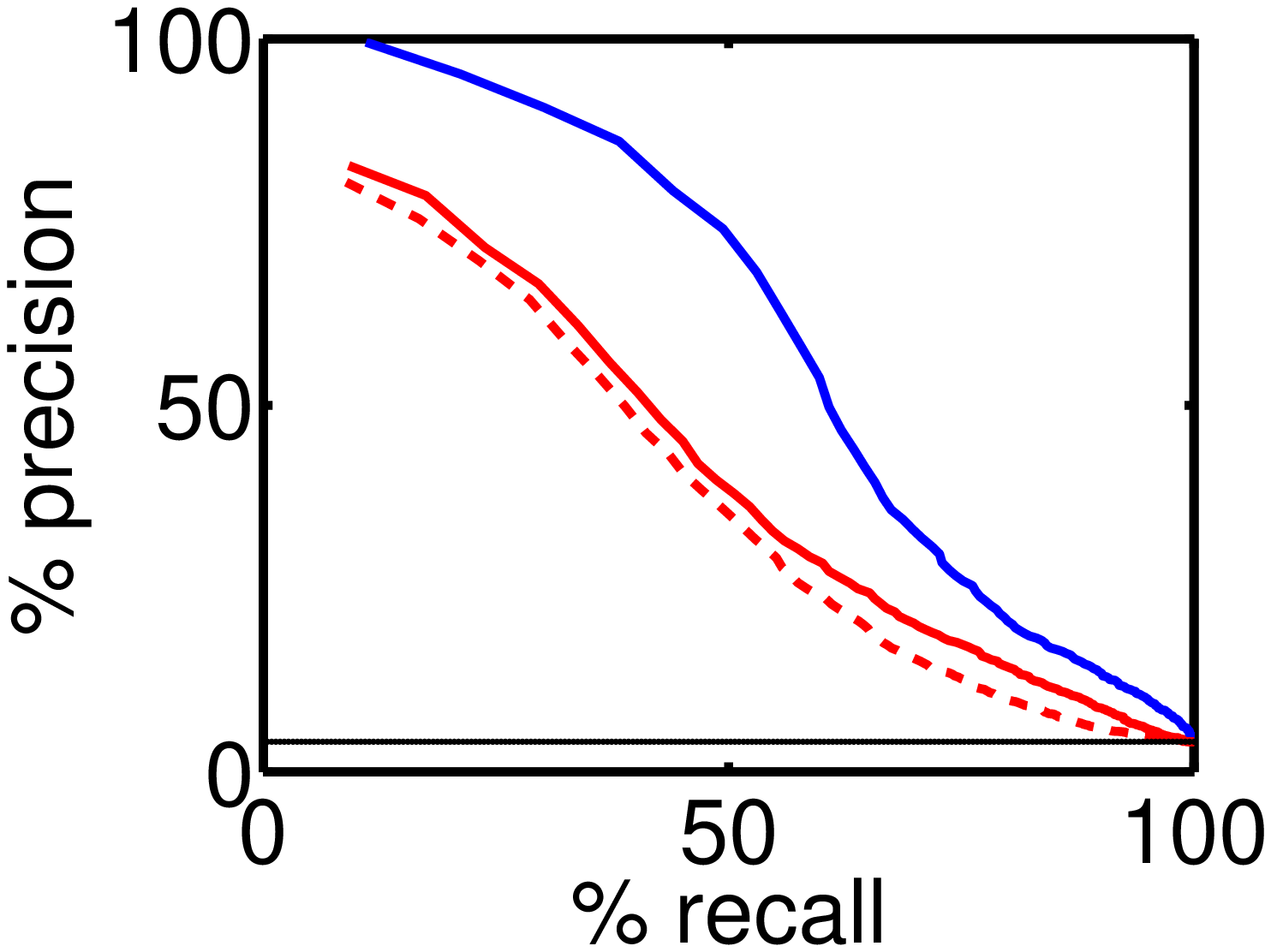}
  }
\caption{Retrieval results with data corrupted by white balance modification (color temperature correction).}
\label{fig:ROC_whitebalance}
\end{figure*}

Now, in the case of color temperature modification (Figure~\ref{fig:ROC_whitebalance}), one observes the following result: $L^1$ distance provides better retrieval scores than {\sc cemd}.
An examination of the results has led us to observe that, in such a case, the intraclass variability results this time from \textbf{differences of weight} of dominant modes in histograms (see Figure~\ref{fig:weight} for an illustration).

\begin{figure*}[htb]
  \centering
  \centering
  \subfigure[Illustration of histogram shift]{\label{fig:shift}
    \hspace{0.01\textwidth}
    \includegraphics[width=.35\textwidth]{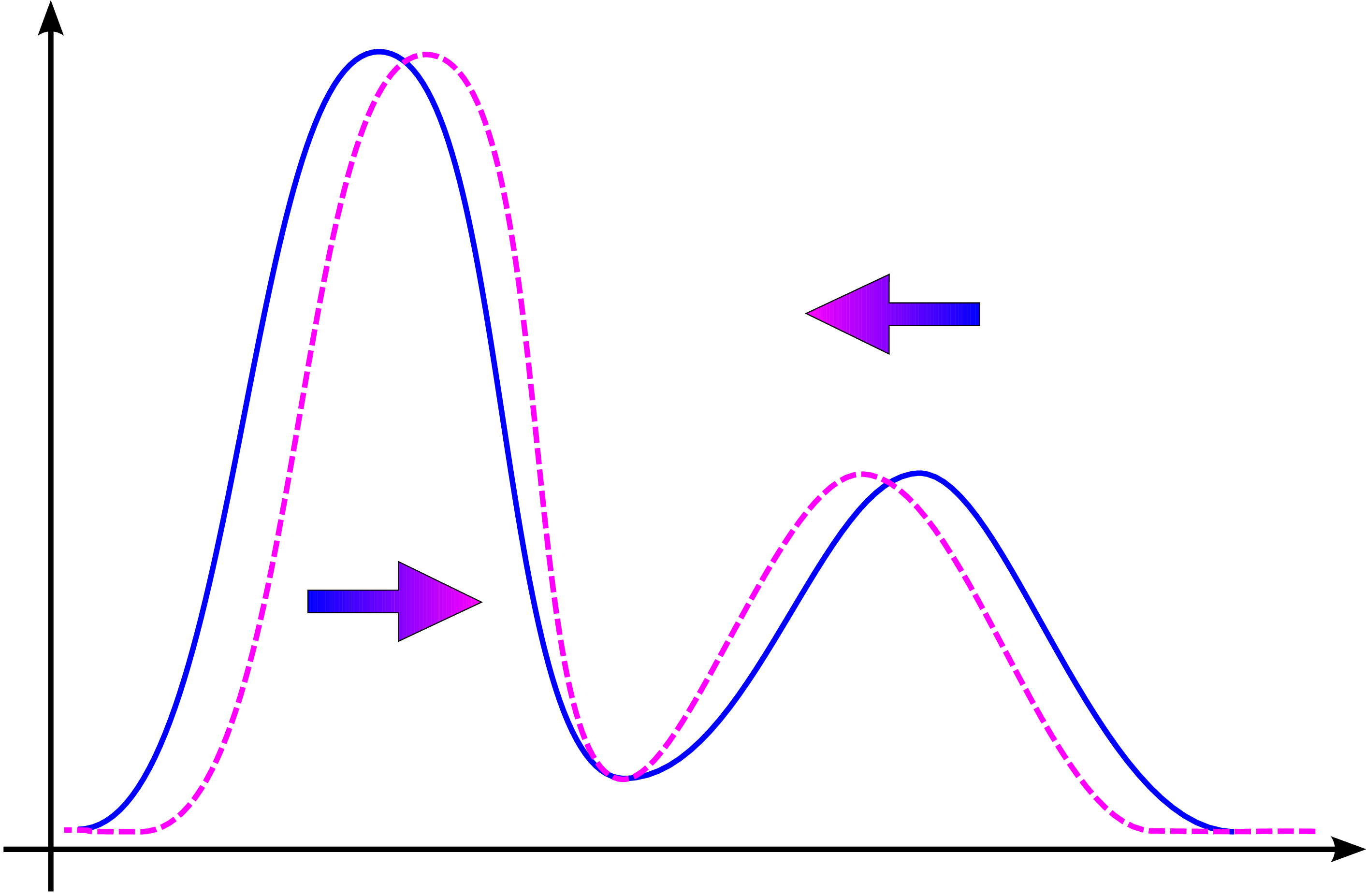}
    \hspace{0.01\textwidth}
  }
  \hspace{0.05\textwidth}
  \subfigure[Illustration of histogram weight variability]{\label{fig:weight}
      \hspace{0.01\textwidth}
      \includegraphics[width=.35\textwidth]{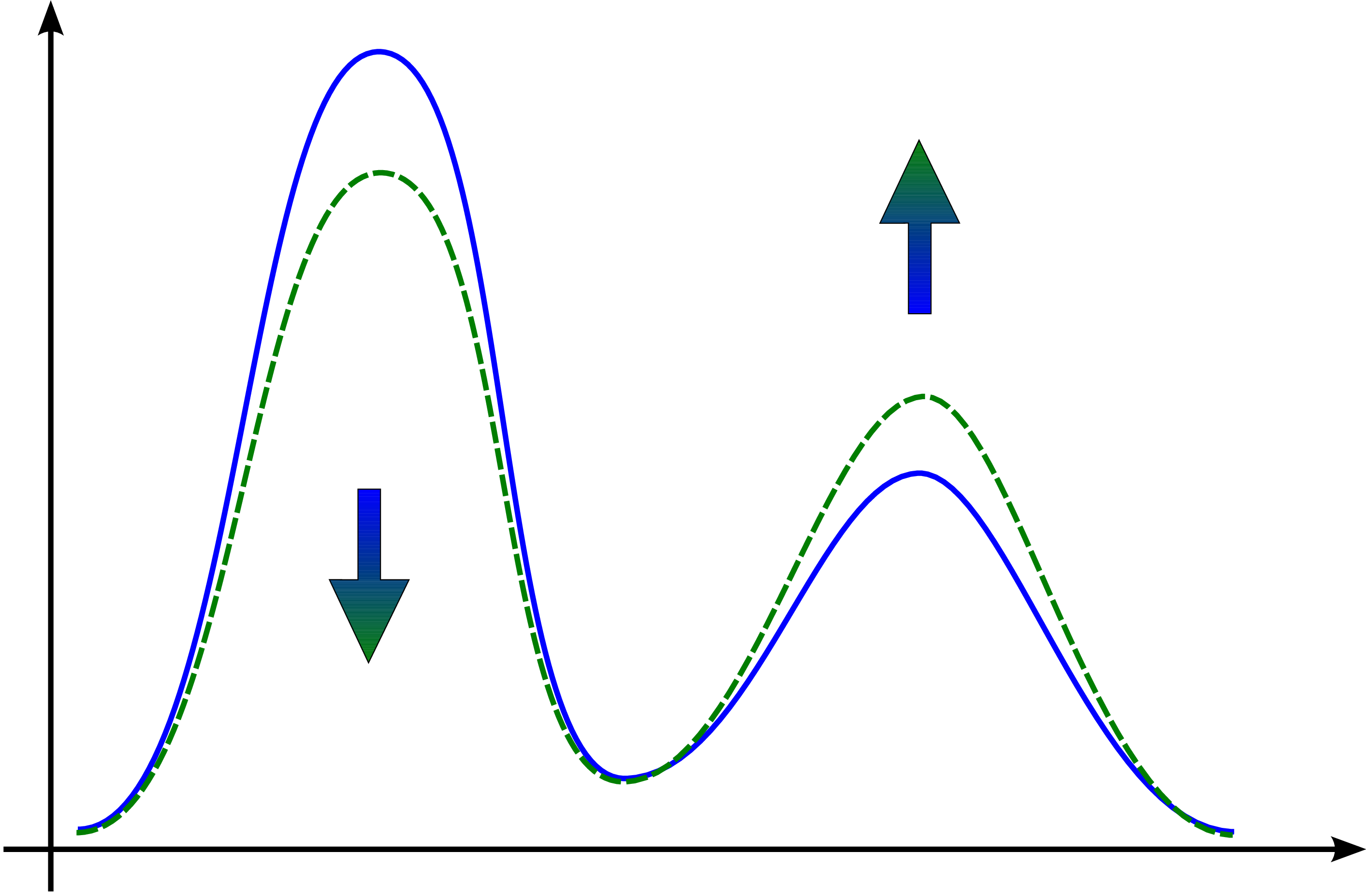}
      \hspace{0.01\textwidth}
  }
\caption{Illustration of the two main classes of perturbations involved in retrieval performances: intraclass shift variability (to the left) and intraclass weight variability (to the right).}
\label{fig:schema_perturbation}
\end{figure*}

\smallskip
In order to understand the implications of these results, a discussion is proposed in the following section.

\section{Is it worth using  transportation distances to compare histograms ?}
\label{sec:comparative_study}

Following the last two experiments of the previous section (gamma correction
and color balance), this section provides a discussion on the relative advantages of Monge-Kantorovich
distances using convex cost functions, those using concave cost functions,
and bin-to-bin distances. The discussion is not specific to the circular case
and will be made from non-circular synthetic examples. 

Writing as before $d(x,y)$ for the geodesic distance on the circle, we consider the following distances: 
\begin{itemize}
\item[$\bullet$] the $L^1$ bin-to-bin distance,
\item[$\bullet$] Monge-Kantorovich distances with concave cost functions:
  \begin{itemize}
      \item[$\circ$] $\textsc{MK}_{\exp\tau}$ defined from Formula~(\ref{eq:monge-kantorovich_c})
      when using the exponential cost function $c(x,y)=1-\exp(-\frac{d(x,y)}{\tau})$
      \item[$\circ$] $\textsc{MK}_{T \tau}$ 
      defined from Formula~(\ref{eq:monge-kantorovich_c}) when using a thresholded cost function as introduced in ~\cite{pele2008eccv,Pele_iccv2009}, that is, $c(x,y)=\min(d(x,y),\tau)$ (see
      Section~\ref{sec:thresholded}).
  \end{itemize}
\item[$\bullet$] Monge-Kantorovich distances with convex cost functions:
  $\textsc{MK}_\lambda=(\textsc{MK}_c)^{\frac 1 \lambda}$, with $\textsc{MK}_c$ the quantity defined by
  Formula~(\ref{eq:monge-kantorovich_c}) when using a cost fonction $c(x,y)=d(x,y)^\lambda$, for $\lambda\geq 1$.
\end{itemize}

Recall that among these distances, only $L^1$, $\textsc{MK}_\lambda$ and $\textsc{MK}_{T 2}$
can be computed in linear time. Observe also, following the
remarks of Section~\ref{sec:thresholded} on the proximity between $\textsc{MK}_{T 2}$
and $L^1$, that these distances in a sense
produce a complete range of alternatives between bin-to-bin distances (such as
$L^1$) and Monge-Kantorovich associated with highly non-convex cost functions
(e.g. $\textsc{MK}_3$). This fact will be quite clear in the following synthetic experiments.

These experiments consist, in order to study the assets of the various
distances, to perform retrieval from synthetic histograms (mixture of two
Gaussians) in the presence of two types of
perturbations: shifts in the positions of bins on the one hand, and variation
in the weight of bins on the other hand (see Figure~\ref{fig:schema_perturbation}). Observe that these two types of perturbations correspond to the
ones encountered at the end of Section~\ref{sec:color_retrieval}.

We assume that elements to be compared belong to two classes $A$ and $B$, and
that each element is represented by one $N$-bins histogram. We model the histograms as the
mixture of two Gaussians. Writing $c \in \{A,B\}$ for the class, these two Gaussians have weights $p^c$ and $(1-p^c)$, means
$\mu_1^c$ and $\mu_2^c$, and standard deviations $\sigma_1^c$ and $\sigma_2^c$
(see Figure~\ref{fig:melange_gaussienne}). In the following experiments,
parameters are set as follows 
\begin{itemize}
\item[$\bullet$] \emph{Histogram construction} Quantization of histograms: $N=100$ bins; Number of samples for Gaussian mixture data generation: $1,000$ samples in $[0,1]$; Number of histograms per class: $1,000$ histograms.
\item[$\bullet$] \emph{Gaussian mixture parameters} Weights: $p^A=0.6$ and $p^B=0.8$; Means: $\mu_2^A=\mu_2^B=0.2$ and $\mu_2^A=\mu_2^B=0.7$; Standard-deviations: $\sigma_1^A=\sigma_1^B=\sigma_2^A=\sigma_2^B=0.05$.
\end{itemize}

\begin{figure*}[htb]
  \centering
  \includegraphics[angle=0,width=0.9\textwidth]{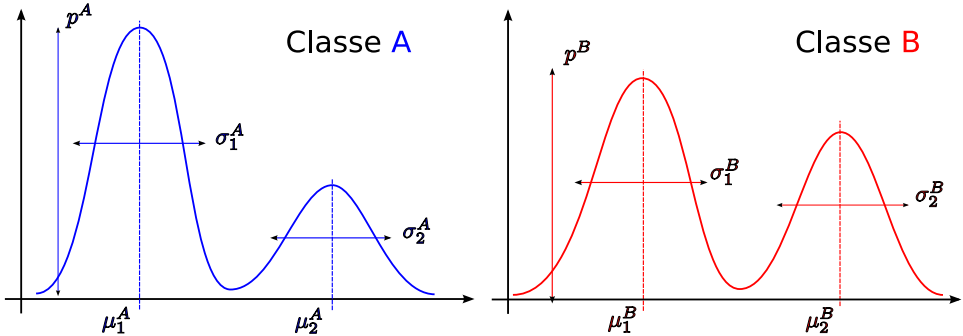}
  \caption{The two classes $A$ and $B$ are defined as a Gaussian mixture model. For each class, the two Gaussian distributions are defined with 4 parameters (means and standard-deviations), plus a weighting parameter $p$.}
  \label{fig:melange_gaussienne}
\end{figure*}

This generative model being chosen, two different kinds of variability can now
be simulated to evaluate the robustness of transportation distances depending
on the cost function~\footnote{The Earth Mover's Distances with exponential
  and truncated cost functions have been computed using the code kindly
  provided by Y. Rubner~\cite{emd_code}.} (see Figure~\ref{fig:schema_perturbation}). 

\paragraph{\bf \em  Histogram shift}

We introduce random shifts in the histogram by modeling the means $\mu_1^c$ as
random variables. We choose $\mu_1^A = 0.2 + \epsilon_\mu$, where $\epsilon_\mu$ is uniformly drawn in $[-0.1;0.1]$.
Some of such generated histograms are superposed in Figure~\ref{fig:exp_shift}.
The precision-recall curves resulting from this two-class retrieval problem are plotted for different metrics in Figure~\ref{fig:ROC_shift}.
One first observes that distances $\textsc{MK}_\lambda$, relying on convex cost functions, give the best results, the larger $\lambda$ the better. Second, it can be seen that transportation distances with concave cost function yields less efficiency. First are distances relying on an exponential cost.  Eventually using transportation distances with truncated $L^1$ distances provides poor results, similar to those obtained with the $L^1$ distance. This fact is in agreement with the analysis made in Section~\ref{sec:thresholded}.

\begin{figure*}[htb]
  \centering
  \subfigure[Histogram shift variability experiment]{\label{fig:exp_shift}
      \includegraphics[angle=0,width=0.48\textwidth]{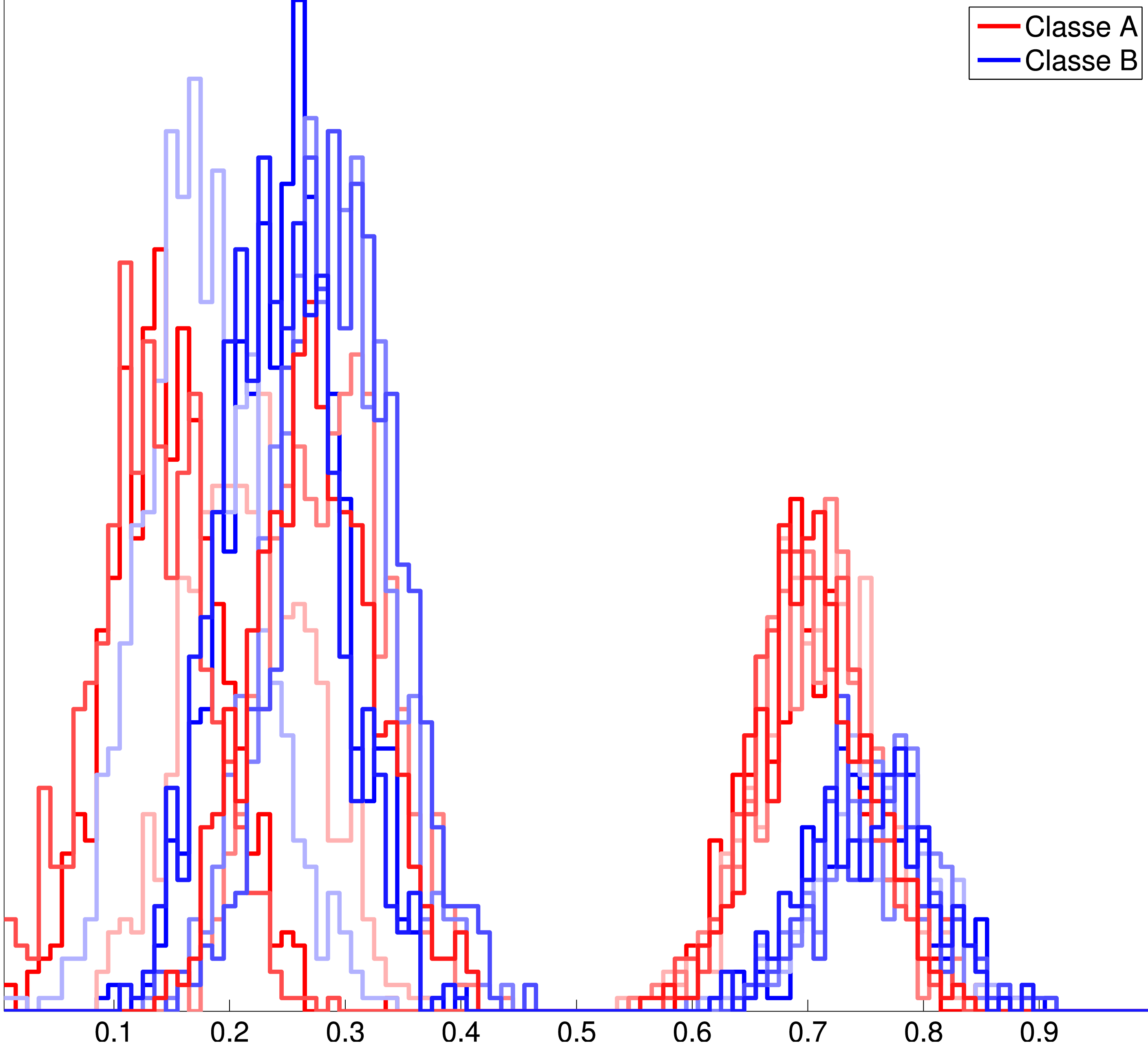}}
  \subfigure[Precision-Recall curve]{\label{fig:ROC_shift}
  \includegraphics[angle=0,width=0.48\textwidth]{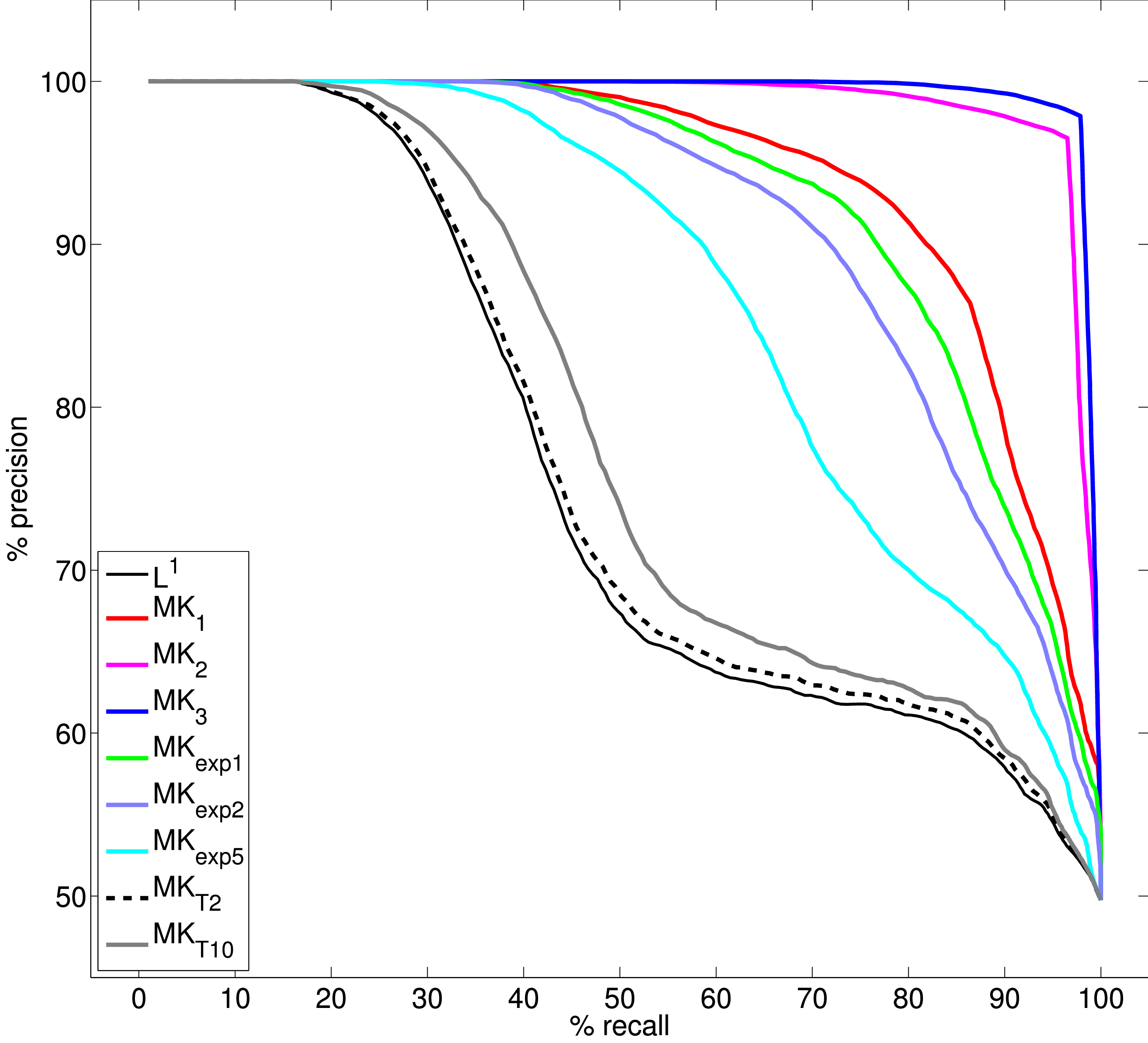}}
  \caption{\textbf{Two-class retrieval problem with intraclass shift variability}.
  The effect of the perturbation on histograms is shown in Figure~\ref{fig:exp_shift}.
  The Precision-Recall curves are displayed in Figure~\ref{fig:ROC_shift} for several transportation distances: $\textsc{MK}_{T \tau}$ refers to as the transportation distance with truncated cost function according to the threshold $\tau \in \{2,10\}$, $\textsc{MK}_{\exp\tau}$ corresponds to the transportation distance with exponential cost function using parameter $\tau \in \{1,2,5\}$, and $\textsc{MK}_{\lambda}$ is the Monge-Kantorovich distance with $\lambda \in \{1,2,3\}$. In addition is shown the curve obtained with $L^1$ metric, which is equivalent to $\textsc{MK}_{T 1}$ (see \S~\ref{sec:L1_Monge}).
  } 
  \label{fig:perturbation_mean}
\end{figure*}

\paragraph{\bf \em Histogram weight variability}

In the second experience, intraclass weight variability are now simulated by
modeling weights as random variables: $p_1^A = 0.6 + \epsilon_p$, where $\epsilon_p$ is uniformly drawn from $[-0.1;0.1]$.
Some of such generated histograms are superposed in Figure~\ref{fig:exp_weight}.
The precision-recall curves resulting from this two-class retrieval problem are plotted for different metric in Figure~\ref{fig:ROC_weight}.
One observes that with this kind of perturbation, transportation distances
with $L^1$ cost function are less robust than the $L^1$ distance.
This time, it can be seen that distances with concave cost function yield
better retrieval performances. Using  thresholded cost functions again
provides results that are very similar
to those obtained with the $L^1$ distance. In the meantime, distances relying
on exponential cost functions are still half-way between convex cost functions
and thresholded cost functions.

It therefore appears that higher robustness to one type of perturbation
yields poorer robustness to the other type. There is a logical tradeoff between
robustness to shifts and weight variability. In this context, and given that it may be computed in linear time, the $\textsc{MK}_1$
distance appears as a good compromise in term of computational cost and
robustness to the two kinds of variability considered here.

\begin{figure*}[htb]
  \centering
  \subfigure[Histogram weight variability experiment]{\label{fig:exp_weight}
   \includegraphics[angle=-0,width=0.48\textwidth]{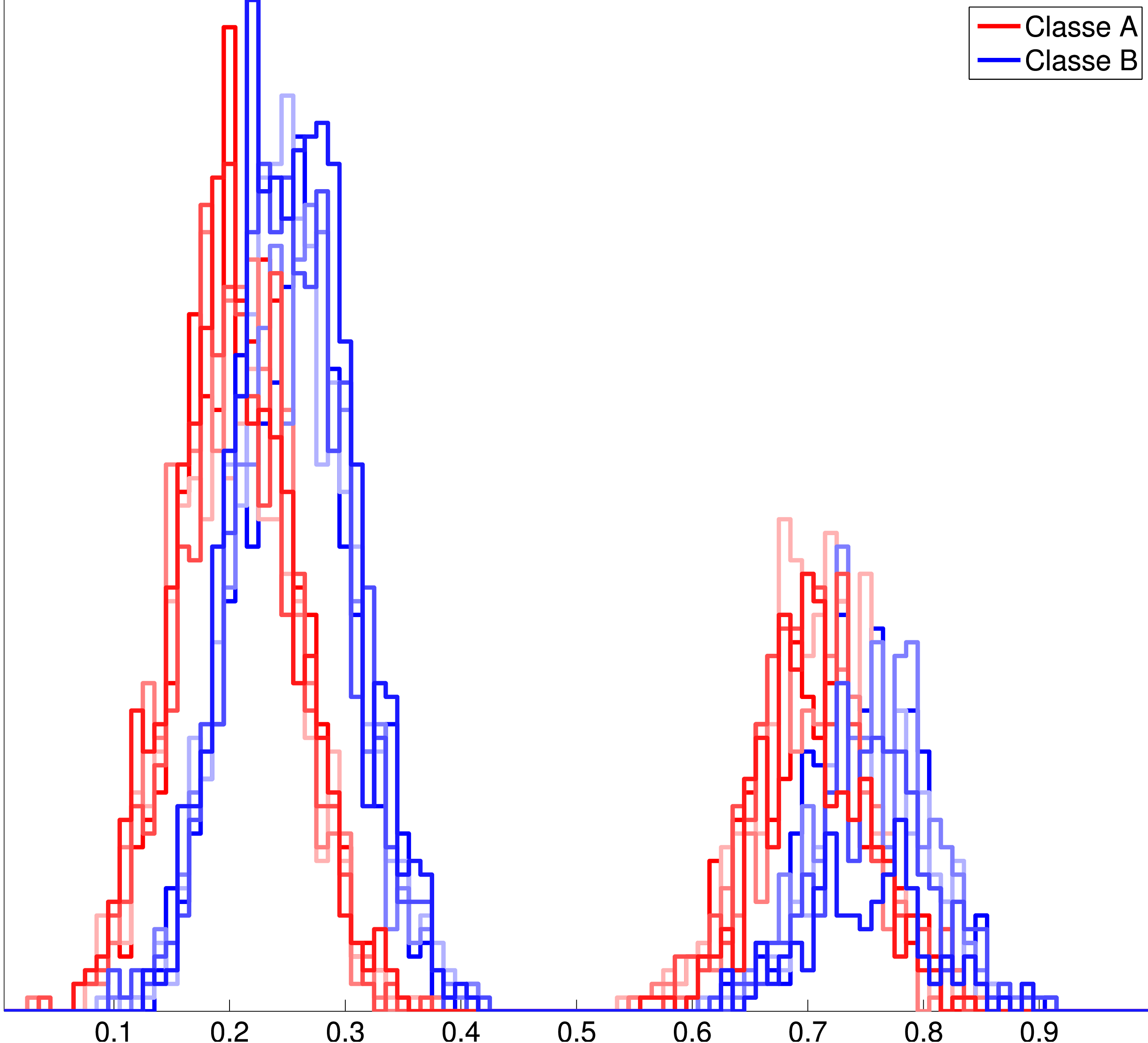}}
  \subfigure[Precision-Recall curve]{\label{fig:ROC_weight}
  \includegraphics[angle=0,width=0.48\textwidth]{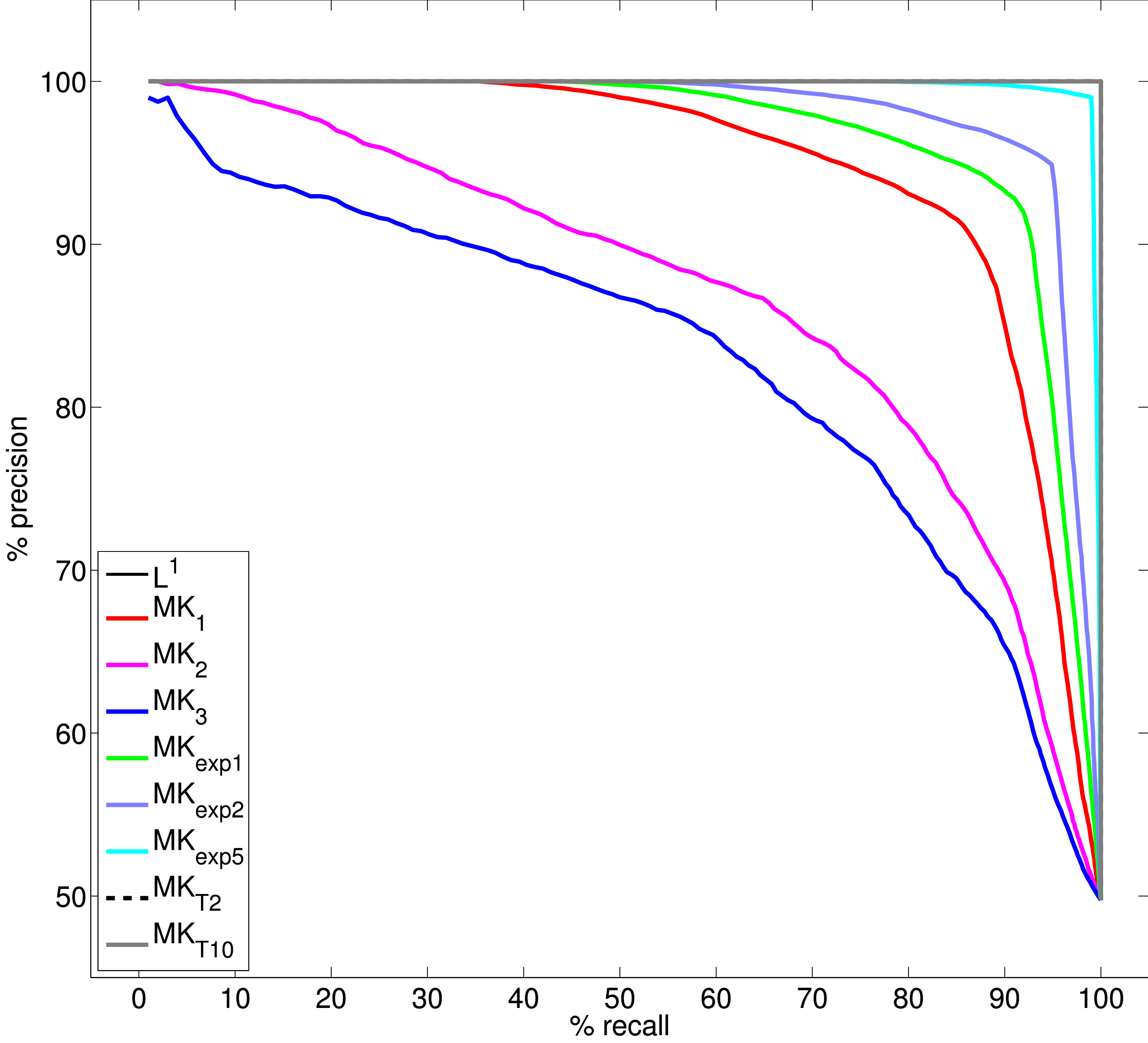}}
  \caption{\textbf{Two-class retrieval problem with intraclass weight variability}.
  The effect of the perturbation on histograms is shown in Figure~\ref{fig:exp_weight}.
  The Precision-Recall curves are displayed in Figure~\ref{fig:ROC_shift}, plotted for different transportation distances.}
   \label{fig:perturbation_weigth}
\end{figure*}

\section{Conclusion}\label{sec:conclusion}

In this paper, the optimal mass transport problem on the circle has been addressed in the case of convex and increasing cost function of the geodesic distance on the circle.
We have proposed a new formulation (and a proof) for estimating the corresponding Monge-Kantorovich distances.
In the particular case where the cost function is the geodesic distance on the circle, it has been shown that the transportation distance $\text{MK}_1$ between circular histograms (also referred to as \textsc{cemd}, standing for Circular Earth Mover's Distance) can be deduced by a very simple Formula~\eqref{eq:cemd_med} which is computed in linear time.

Then, several applications in this framework has been studied (hue transfer, local features comparison and color image retrieval), exploiting both the optimal transportation cost between histograms but also the corresponding optimal flow.
Other applications could also benefit from the \textsc{cemd} metric, such as shape recognition based on circular descriptors (see \eg character recognition with orientation histogram~\cite{histogram_orientation_emd}, and curvature based descriptor along closed contour~\cite{Mokhtarian_curvature_shape,morphological_curvature_shape}).

In the last section, a comparative analysis of transportation distances with different cost functions has been proposed, 
considering two types of perturbations which arise with histogram representation: mean and weight changes of dominant modes. 
We have demonstrated that there is a tradeoff between these two phenomena when using either convex or concave cost functions.
Eventually, the proposed \textsc{cemd} metric offers an interesting compromise between these two choices, while being easy to use.

\begin{acknowledgements}
Delon acknowledges the support of the French Agence Nationale de la Recherche (ANR), under grant BLAN07-2\_183172, Optimal transport: Theory and applications to cosmological reconstruction and image processing (OTARIE), and would like to thank J. Salomon and A. Sobolevski for fruitful discussions.
\end{acknowledgements}

\begin{appendix}
\begin{normalsize}

\section{Appendix: Proof of Theorem~\ref{theoremMK}}\label{sec:annexe}

This appendix provides a complete proof of Theorem~\ref{theoremMK} in the case where $f$ and $g$ are discrete distributions (as written in Equation~\eqref{eq:discrete_histograms}).  We first prove this theorem for distributions composed of unitary masses, and conclude thanks to continuity arguments.

\subsection{Introduction}

Consider two discrete sets of points $\{x_1,\dots x_P\}$ and $\{y_1,\dots y_P\}$ on the unit circle $S¹$, and the corresponding discrete distributions 
\begin{equation}
\label{eq:unitary_masses}
f = \frac{1}{P} \sum_{k=1}^P \delta_{x_k}, \text{ and }  g = \frac{1}{P} \sum_{k=1}^P \delta_{y_k},
\end{equation}
where the notations $x_k$, $y_k$ are used equally for points on the unit circle or for their coordinates in $[0,1[$. Let $d$ be the geodesic distance along the circle (given by Equation~\eqref{eq:distance}) and assume that $c$ can be written $c(x,y) = h(d(x,y))$ with $h$ a nonnegative, increasing and convex function.
It is well known (this is a consequence of Birkhoff's theorem, see for example the introduction of~\cite{ref:villani03book}) that the optimal transportation cost between $f$ and $g$ equals
\begin{equation}
\label{distance_MKsigma}
  \textsc{MK}_{c}\,(f,g)=\min_{\sigma \in \Sigma_P}   W_{\sigma}^{c} \,(f,g),\;\text{ with }  \;\; W_{\sigma}^{c}\,(f,g) : = \frac{1}{P} \sum_k c(x_k, y_{\sigma(k)}) = \frac{1}{P} \sum_k h(d(x_k, y_{\sigma(k)})),
\end{equation}
 where $\Sigma_P$ is the set of permutations of $\{1,\dots P\}$. 
In other words, finding the optimal transportation between $f$ and $g$ boils down to find the optimal permutation $\sigma$ between the points $\{x_k\}$ and $\{y_j\}$.

\subsubsection{Paths}\label{sec:paths}

If $x$ and $y$ are two different points of $S^1$, we note \textbf{$\gamma(x,y)$ the geodesic path linking $x$ and $y$ on $S^1$} (the path is supposed open: it does not contain $x$ and $y$). This path is always unique except in the case where $x$ and $y$ are in opposite positions on the circle. In this case, we choose $\gamma(x,y)$ as the path going from $x$ to $y$ in the trigonometric direction. 
A path $\gamma(x,y)$ is said to be \textbf{positive} if it goes from $x$ to $y$ in the trigonometric direction. If the path goes from $x$ to $y$ in the opposite direction , it is said to be \textbf{negative}.

\subsubsection{Cumulative distribution functions}\label{sec:cumulative}
The cumulative distribution function of $f$ has been defined in Equation~\eqref{eq:cumulative}. Now, on $[0,1[$ seen as a the unit circle $S¹$, no strict order can be defined between points, which means that we can define as many cumulative distribution functions as there are starting points on the circle.
If  $x$ is a point in $[0,1[$,  the $x$-cumulative distribution function $F_x$ of $f$ can be defined by choosing $x$ as the reference point on the circle $S^1$ and by summing the mass in the trigonometric order from this new reference point: 
\begin{equation}
  \label{eq:cumulative_x}
 \forall y \in \R,\;\;  F_x(y)=  F(x+y) - F(x).
\end{equation} 
An example of a cumulative distribution $F$ and its corresponding $x$-cumulative distribution $F_x$ on $[0,1[$ is shown on Figure~\ref{fig:xcumulative}.

\begin{figure}[htb]
  \centering
\includegraphics[width=8cm]{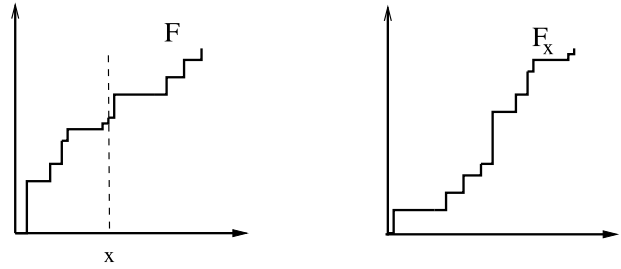}
\caption{$F$ on the left and $F_x$ on the right.}
\label{fig:xcumulative}
\end{figure}

\subsection{Preliminary results}

In the following, we prove that if $f$ and $g$ can be written as in Equation~\eqref{eq:unitary_masses}, if the points $x_1,\dots x_P$ and $y_1,\dots y_P$ are pairwise different, and if $\sigma$ is an optimal permutation for~(\ref{distance_MKsigma}), there is always a point on the circle which is not contained in any optimal path of $\sigma$. This result is proven first for strictly convex functions $h$ and for any optimal permutation $\sigma$, then for convex functions $h$ and a well chosen optimal permutation. 

\begin{proposition}
\label{prop1}
\textit{Assume that $h$ is strictly convex. Let $x_1, \dots x_P$ and $y_1,\dots y_P$ be $P$ points in $[0,1[$, all pairwise different. 
Then for each permutation $\sigma$ of $\Sigma_P$ which minimizes~(\ref{distance_MKsigma}) , there exists $k \in \{1, \dots P\}$ such that for all $l \neq k$, $x_k \notin \gamma(x_l,y_{\sigma(l)})$.}
\end{proposition}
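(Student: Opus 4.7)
My plan is a swap argument by contradiction. Suppose $\sigma$ is optimal for~(\ref{distance_MKsigma}) but, contrary to the conclusion, every index $k$ admits some $l\neq k$ with $x_k\in\gamma(x_l,y_{\sigma(l)})$. Associate to $\sigma$ the directed graph $G$ on $\{1,\dots,P\}$ with an edge $k\to l$ whenever $x_k\in\gamma(x_l,y_{\sigma(l)})$ and $l\neq k$; the hypothesis forces every vertex of $G$ to have out-degree at least one, so following edges in a finite graph produces a (simple) directed cycle $k_1\to k_2\to\cdots\to k_m\to k_1$ with pairwise distinct indices.

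Writing $a_i=x_{k_i}$ and $b_i=y_{\sigma(k_i)}$, I modify $\sigma$ by cyclically rotating destinations along this cycle, setting $\sigma'(k_i)=\sigma(k_{i+1})$ for $i=1,\dots,m$ (indices taken mod $m$) and $\sigma'=\sigma$ elsewhere. The key geometric fact is that each $a_i$ lies in the interior of the geodesic $\gamma(a_{i+1},b_{i+1})$, so it splits this arc into two subarcs whose lengths add to $d(a_{i+1},b_{i+1})\leq 1/2$; each subarc, having length at most $1/2$, is itself a geodesic between its endpoints, whence
\begin{equation*}
d(a_i,b_{i+1})=d(a_{i+1},b_{i+1})-d(a_{i+1},a_i).
\end{equation*}
Re-indexing the cycle sum via $j=i+1$ then gives
\begin{equation*}
\sum_{i=1}^m h\bigl(d(a_i,b_{i+1})\bigr)=\sum_{j=1}^m h\bigl(d(a_j,b_j)-d(a_j,a_{j-1})\bigr).
\end{equation*}

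Since the indices $k_1,\dots,k_m$ are pairwise distinct, $d(a_j,a_{j-1})>0$ for every $j$, and an increasing strictly convex function is automatically strictly increasing (any flat segment would contradict strict convexity combined with monotonicity). Hence every term on the right-hand side is strictly smaller than $h(d(a_j,b_j))$, so the new permutation satisfies $W_{\sigma'}^{c}(f,g)<W_\sigma^{c}(f,g)$, contradicting optimality of $\sigma$. I conclude that some $x_k$ is missed by every $\gamma(x_l,y_{\sigma(l)})$ with $l\neq k$, which is the claim.

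The main conceptual hurdle is identifying the right combinatorial witness of non-optimality: the covering assumption forces a cycle in $G$, and \emph{cyclic} rotation of destinations along that cycle is precisely the swap that exploits the telescoping relation $d(a_i,b_{i+1})=d(a_{i+1},b_{i+1})-d(a_{i+1},a_i)$. The remaining verifications --- that each subarc has length at most $1/2$ and therefore coincides with the corresponding geodesic, and that strict convexity plus monotonicity delivers strict monotonicity of $h$ --- are routine.
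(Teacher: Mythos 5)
Your proof is correct, and it takes a genuinely different and shorter route than the paper's. The paper first establishes Lemma~\ref{lemma_path} (for strictly convex $h$, optimal paths never nest and, when they meet, share an orientation), then orders the $x_i$ trigonometrically and shows that the covering hypothesis forces the rigid pattern $x_l\in\gamma_{l-1}$ for every $l$ (or $x_l\in\gamma_{l+1}$ for every $l$), before concluding with essentially the same cyclic-shift contradiction you use. You bypass all of this geometric bookkeeping: the negated conclusion immediately yields, by pigeonhole in the finite covering graph, \emph{some} simple directed cycle of length $m\geq 2$, and rotating the destinations along that cycle already beats $\sigma$, thanks to the telescoping identity $d(a_i,b_{i+1})=d(a_{i+1},b_{i+1})-d(a_i,a_{i+1})$ (valid because a subarc of a geodesic arc of length at most $1/2$ is itself a geodesic) together with strict monotonicity of $h$. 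Two remarks. First, your argument never uses convexity beyond deducing that an increasing strictly convex function is strictly increasing; it therefore proves the proposition for \emph{any} strictly increasing $h$, which would also let one streamline Corollary~\ref{corollaire1} by approximating an increasing convex $h$ with strictly increasing rather than strictly convex functions. Second, what the paper's longer route buys is the structural content of Lemma~\ref{lemma_path} (non-nesting, coherent orientation of the optimal paths), which is of independent interest but is not actually needed for the statement at hand. The small verifications you flag as routine (each subarc has length at most $1/2$ and is hence a geodesic; strict monotonicity follows from strict convexity plus monotonicity; $m\geq 2$ so that $d(a_j,a_{j-1})>0$) all check out.
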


The proof of this proposition needs the following lemma, which describes some properties of the geodesic paths $\gamma(x_l,y_{\sigma(l)})$ obtained when $\sigma$ is a minimizer of  (\ref{distance_MKsigma}) and $h$ is strictly convex.

\begin{lemma}
\label{lemma_path}
\textit{Assume that $h$ is strictly convex. Let $\sigma$ be a minimizer of  (\ref{distance_MKsigma}) and let $\gamma_l=\gamma(x_l,y_{\sigma(l)})$ and $\gamma_k=\gamma(x_k,y_{\sigma(k)})$ (with $l \neq k$) be two geodesic paths for the assignment defined by $\sigma$. Assume also that $x_l \neq x_k$ and $y_{\sigma(l)} \neq y_{\sigma(k)}$. Then, one of the following holds:
\begin{itemize}
\item[$\bullet$] $\gamma_l \cap \gamma_k = \emptyset$ ;
\item[$\bullet$] $\gamma_l \cap \gamma_k \neq \emptyset$ and in this case $\gamma_l$ and $\gamma_k$ have the same direction (both positive or both negative) and neither of them is contained in the other.
\end{itemize}
}\end{lemma}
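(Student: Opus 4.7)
The plan is to argue by contradiction: assume that $\gamma_l$ and $\gamma_k$ intersect but that the conclusion fails, so we are in one of the two forbidden configurations: (i) the two paths have opposite orientations, or (ii) the two paths have the same orientation but one contains the other. In each case I will exhibit the ``swapped'' permutation $\sigma'$, equal to $\sigma$ except that $\sigma'(l)=\sigma(k)$ and $\sigma'(k)=\sigma(l)$, and show that $W_{\sigma'}^c(f,g)<W_\sigma^c(f,g)$, contradicting the optimality of $\sigma$. The key inequality driving both cases is the elementary fact that if $h$ is strictly convex and $a\leq a'\leq b'\leq b$ with $a+b=a'+b'$ and $\{a,b\}\neq\{a',b'\}$, then $h(a)+h(b)>h(a')+h(b')$.

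I will treat the nested case (ii) first because it is cleanest. Assume $\gamma_k\subset\gamma_l$ and both are, say, positively oriented. Walking along $\gamma_l$ in its positive sense, the four (pairwise distinct) points appear in the order $x_l,x_k,y_{\sigma(k)},y_{\sigma(l)}$. Let $D_1=d(x_l,y_{\sigma(l)})$, $D_2=d(x_k,y_{\sigma(k)})$, and let $D_1'$, $D_2'$ be the arclengths along this same arc from $x_l$ to $y_{\sigma(k)}$ and from $x_k$ to $y_{\sigma(l)}$ respectively. A direct computation gives $D_1+D_2=D_1'+D_2'$ and $D_1>\max(D_1',D_2')$, so by strict convexity $h(D_1)+h(D_2)>h(D_1')+h(D_2')$. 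Since $d(x_l,y_{\sigma(k)})\leq D_1'$ and $d(x_k,y_{\sigma(l)})\leq D_2'$, and $h$ is increasing, the swap $\sigma'$ strictly decreases the cost.

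For case (i), the two oriented arcs ``cross'' on $S^1$. Up to relabeling and up to the orientation of the circle, the cyclic order of the four points must be one of a few inequivalent configurations (for instance $x_l,y_{\sigma(k)},y_{\sigma(l)},x_k$ read positively). In each configuration I read off the four relevant geodesic distances, verify that the original pairing is the ``crossed'' one while the swapped pairing is the ``uncrossed'' one, and apply the same spread-reducing strict-convexity inequality after checking that the two new geodesics are each at most the corresponding uncrossed arclengths taken inside $\gamma_l\cup\gamma_k$. This again yields $W_{\sigma'}^c(f,g)<W_\sigma^c(f,g)$, a contradiction.

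The main obstacle is the bookkeeping in case (i): on the circle the ``uncrossing'' is no longer tautological as it is on the line, because the new geodesic between $x_l$ and $y_{\sigma(k)}$ may go around the circle rather than through $\gamma_l\cup\gamma_k$. One has to check that even so, $d(x_l,y_{\sigma(k)})$ does not exceed the arclength used in the strict-convexity comparison, which is where the monotonicity of $h$ together with the fact that $\gamma_l$ and $\gamma_k$ are geodesics (so of length $\le 1/2$) is used. Once this geometric point is settled in each cyclic configuration, the proof reduces uniformly to the same one-line convexity argument.
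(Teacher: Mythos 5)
Your proof is correct and follows essentially the same route as the paper's: an exchange argument that rules out the forbidden configurations by showing the swapped permutation $\sigma'$ strictly beats $\sigma$, with the case analysis organized by orientation and nesting rather than (as in the paper) by which pair of endpoints bounds the intersection arc. One remark that simplifies your case (i): in every genuinely crossed (opposite-orientation, non-nested) configuration, \emph{both} swapped distances are individually strictly smaller than the originals, since each of $x_l \to y_{\sigma(k)}$ and $x_k \to y_{\sigma(l)}$ is realized by a proper sub-arc of one of the two geodesics (hence of length $<\tfrac 12$); so monotonicity of $h$ alone gives the strict decrease there, and the equal-sum strict-convexity inequality is really only needed for the nested same-orientation configuration you treated first.
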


\begin{proof}
  Assume that $\gamma_l \cap \gamma_k \neq \emptyset$. If  $\gamma_l \cap \gamma_k$ is equal to $\gamma(x_l,x_k)$, then, since $h$ is an increasing function of $d$,  $ c(x_l,y_{\sigma(l)})> c(x_k,y_{\sigma(l)})$ and $c(x_k,y_{\sigma(k)})> c(x_l,y_{\sigma(k)})$, which contradicts the optimality of $\sigma$. 
The same conclusion holds if  $\gamma_l \cap \gamma_k$ is equal to $\gamma(y_{\sigma(l)},y_{\sigma(k)})$.
Moreover, if for example the path $\gamma_l$ is included in $\gamma_k$, then the strict convexity of the function $h$ implies
$$c(x_l,y_{\sigma(l)})+c(x_k,y_{\sigma(k)})
> c(x_l,y_{\sigma(k)})+c(x_k,y_{\sigma(l)}),$$ which also contradicts the optimality of $\sigma$. 
 Thus, $\gamma_l \cap \gamma_k$ is equal to $\gamma(x_l,y_{\sigma(k)})$ or to $\gamma(x_k,y_{\sigma(l)})$ and it follows that $\gamma_k$ and $\gamma_l$ are either both positive or both negative.
\end{proof}

\begin{proofof}{Proposition~\ref{prop1}}
Let $\sigma$ be a minimizer of~(\ref{distance_MKsigma}). In the following, we will denote by $\gamma_l$ the geodesic path $\gamma({x_l,y_{\sigma(l)}})$. We can assume without loss of generality that the points $x_1, \dots x_P$ are in trigonometric order on the circle.

Assume that for each $l \in \{1,\dots P \}$, there exists $q(l) \neq l $ such that $x_l$ belongs to the open path $\gamma_{q(l)}$.
Then, for each $l$, we have $\gamma_{q(l)} \cap \gamma_{l} \neq \emptyset$, which means that the geodesic paths $\gamma_{q(l)}$ and $\gamma_{l}$ are either both positive or both negative (from lemma~\ref{lemma_path}).
Assume for instance that they are both positive and let us show that in this case $x_l \in \gamma_{l-1}$ (with $l-1=P$ if $l=0$). If $ q(l) = l-1$, there is nothing to prove. If $ q(l) \neq l-1$, it means in particular that $x_{q(l)}, x_{l-1}, x_l$ are in trigonometric order on the circle. Since $\gamma_{q(l)}$ is a positive path starting from  $x_{q(l)}$ and containing $x_l$, it follows that $\gamma_{q(l)}$ contains $x_{l-1}$ (recall that the points are assumed to be pairwise different, in particular $x_{l-1} \neq x_{q(l)}$). Thus $\gamma_{l-1} \cap \gamma_{q(l)} \neq \emptyset$, which implies that $\gamma_{l-1}$ is positive. Now, $x_l$ must be in $\gamma_{l-1}$, otherwise we would have $\gamma_{l-1} \subset \gamma_{q(l)}$, which contradicts lemma~{\ref{lemma_path}}.
Thus, if the paths $\gamma_{q(l)}$ and $\gamma_{l}$ are both positive, $x_l \in \gamma_{l-1}$. 

In the same way, if $\gamma_{q(l)}$ and $\gamma_{l}$ are both negative, then $x_l \in \gamma_{l+1}$. In any case, for each $l  \in \{1, \dots P\}$, $x_l \in \gamma_{l-1} \cup \gamma_{l+1}$ (with the obvious convention $\gamma_{P+1}=\gamma_1$, $\gamma_{0}=\gamma_P$).

Now, suppose that for a given $k \in \{1, \dots P\}$, $x_k$ is in $\gamma_{k-1}$. Then, $\gamma_{k-1}$ and $\gamma_{k}$ have the same direction. From lemma~{\ref{lemma_path}}, it follows that $x_{k-1}$ cannot be contained in $\gamma_k$. Since we know that $x_{k-1} \in \gamma_{k-2} \cup \gamma_{k}$, $x_{k-1}$ must be in $\gamma_{k-2}$. Recursively, for each $l \in \{1, \dots P\}$, $x_l  \in \gamma_{l-1}$. It follows that for each $l \in \{1, \dots P\}$, $d(x_l, y_{\sigma(l-1)})  <  d(x_{l-1}, y_{\sigma(l-1)})$, and since $h$ is increasing
\begin{equation}
  \label{eq:2}
  \sum_{l=1}^P c(x_l, y_{\sigma(l)})  >   \sum_{l=1}^P c(x_{l+1}, y_{\sigma(l)}),
\end{equation}
which contradicts the fact that $\sigma$ is a minimizer of~(\ref{distance_MKsigma}). We come to the same conclusion if for a given $k \in \{1, \dots P\}$, $x_k$ is in $\gamma_{k+1}$
\end{proofof}

The same result can be proven for any convex function $h$ with the difference that it is only satisfied for a good choice of the permutation $\sigma$ which minimizes~(\ref{distance_MKsigma}), and not for all of these permutations. This result can be seen as a limit version of proposition~\ref{prop1}.

\begin{corollary}
\label{corollaire1}
\textit{Assume that $h$ is convex. Let $x_1,\dots x_P$ and $y_1,\dots y_P$ be $P$ points in $[0,1[$. Assume that all these points are pairwise different. 
Then there exists a permutation $\sigma$ of $\Sigma_P$ which minimizes~(\ref{distance_MKsigma}) and a point $x_k \in \{x_1, \dots x_P\}$ such that for all $l \neq k$, $x_k \notin \gamma(x_l,y_{\sigma(l)})$.}
\end{corollary}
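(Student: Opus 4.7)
My plan is to prove the corollary by a perturbation argument that reduces it to the strictly convex case already handled by Proposition~\ref{prop1}. The idea is to regularize $h$ into a family of strictly convex, increasing, nonnegative functions and then pass to the limit, using the finiteness of $\Sigma_P$ to extract a subsequence on which both the optimal permutation and the distinguished index stabilize.

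Concretely, for each $\eps>0$, set $h_\eps(t)=h(t)+\eps t^2$. Since $h$ is convex, increasing and nonnegative on $[0,\tfrac12]$, so is $h_\eps$, and the additional term $\eps t^2$ makes $h_\eps$ strictly convex. Let $c_\eps(x,y)=h_\eps(d(x,y))$, and let $\sigma_\eps\in\Sigma_P$ be any minimizer of $W^{c_\eps}_\sigma(f,g)=\frac1P\sum_l c_\eps(x_l,y_{\sigma(l)})$. Proposition~\ref{prop1} applies to $h_\eps$, so there exists $k_\eps\in\{1,\dots,P\}$ such that $x_{k_\eps}\notin\gamma(x_l,y_{\sigma_\eps(l)})$ for all $l\neq k_\eps$.

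Now $\Sigma_P\times\{1,\dots,P\}$ is finite, so we can pick a sequence $\eps_n\downarrow 0$ along which the pair $(\sigma_{\eps_n},k_{\eps_n})$ is constant, equal to some $(\sigma,k)$. Because the property ``$x_k\notin\gamma(x_l,y_{\sigma(l)})$ for $l\neq k$'' depends only on the (fixed) points and on $\sigma$, it transfers immediately to $\sigma$. It remains to verify that $\sigma$ is a minimizer of~\eqref{distance_MKsigma} for the original cost $c(x,y)=h(d(x,y))$. For any $\tau\in\Sigma_P$, the optimality of $\sigma_{\eps_n}=\sigma$ for $h_{\eps_n}$ gives
\begin{equation*}
\sum_l h(d(x_l,y_{\sigma(l)}))+\eps_n\sum_l d(x_l,y_{\sigma(l)})^2\;\leq\;\sum_l h(d(x_l,y_{\tau(l)}))+\eps_n\sum_l d(x_l,y_{\tau(l)})^2,
\end{equation*}
and letting $n\to\infty$ yields $\sum_l h(d(x_l,y_{\sigma(l)}))\leq\sum_l h(d(x_l,y_{\tau(l)}))$, so $\sigma$ is optimal for $c$.

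The only slightly delicate point is choosing the regularization so that $h_\eps$ still satisfies every hypothesis used in Proposition~\ref{prop1} (strict convexity, monotonicity, nonnegativity) and so that $h_\eps\to h$ uniformly on the relevant range $[0,\tfrac12]$; the choice $h_\eps=h+\eps t^2$ makes all of this transparent. Once that is in place, the extraction of a stationary subsequence from the finite set $\Sigma_P\times\{1,\dots,P\}$ and the limit passage above finish the proof with no further computation.
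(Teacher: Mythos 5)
Your proof is correct and follows essentially the same route as the paper's: approximate $h$ by strictly convex functions, invoke Proposition~\ref{prop1} for each approximant, and use the finiteness of $\Sigma_P$ to pass to the limit. Your version is, if anything, slightly cleaner --- the explicit regularization $h+\eps t^2$, the pigeonhole extraction of a constant pair $(\sigma,k)$, and the direct limit passage in the optimality inequality replace the paper's abstract pointwise-convergent sequence $h^n$ and its $2\eps$ estimate.
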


\begin{proof}
  We know that for any strictly convex function $h$, if ${\sigma}_h$ minimizes the cost $\sigma \mapsto W_{\sigma}^c\,(f,g)$, there exists $k \in \{1, \dots P\}$ such that for all $l \neq k$, $x_k \notin \gamma_l=\gamma(x_l,y_{{\sigma}_{h}(l)})$.

Now, assume that $h$ is convex (not strictly). One can always find a sequence $(h^n)$ of increasing and strictly convex functions such that $h^n$ converges pointwise towards $h$ when $n \rightarrow \infty$. If $\sigma$ and the points $x_1,\dots x_P, y_1,\dots y_P$ are fixed, then the finite sum $ W_{\sigma}^n \,(f,g):= \frac 1 P \sum_k h^n(d(x_k,y_{\sigma(k)}))$ tends towards  $W_{\sigma} \,(f,g)= \frac 1 P \sum_k h(d(x_k,y_{\sigma(k)}))$ when $n \rightarrow \infty$. Thus, for each $\eps >0$, there exists an integer $N$, such that for all $n \geq N$, $|  W_{{\sigma}}^{n}\,(f,g) - W_{{\sigma}}\,(f,g) | \leq \eps$. Since $\Sigma_P$ is a finite set, we can chose $N$ large enough  such that this property holds for every $\sigma$ in $\Sigma_P$. We can also chose $N$ such that $| \min_{\sigma} W_{\sigma} \,(f,g) - \min_{\sigma} W_{\sigma}^{n}\,(f,g)  | \leq \eps$. Now, if $n\geq N$ and if $\sigma^*$ is an optimal permutation for $W_{{\sigma}}^{n}\,(f,g)$, it follows that
 \begin{eqnarray*}
| \min_{\sigma} W_{\sigma}\,(f,g) - W_{{\sigma^*}}\,(f,g) |  &\leq&   
 | \min_{\sigma} W_{\sigma}\,(f,g) - \min_{\sigma} W_{\sigma}^n\,(f,g)  |  \\
 && \quad +  |  W_{{\sigma^*}}^{n}\,(f,g) - W_{{\sigma^*}}\,(f,g) | \\
&\leq& 2 \eps. 
\end{eqnarray*}
Since $\Sigma_P$ is a finite set, the fact that this distance can be made arbitrarily small implies that when $n$ is large enough, a minimizer ${\sigma^*}$ of $W^{n}_{\sigma} (f, g)$ is also a minimizer of $ W _{\sigma} (f, g)$. This proves that there exists at least one minimizer $\sigma$ of $\sigma \mapsto W_{\sigma} (f, g)$ such that $x_k \notin \gamma(x_l, y_{\sigma_\lambda(l)})$ for some $k\in\{1, \dots, P\}$ and all $l\neq k$.
\end{proof}

\medskip
\subsection{Proof of Theorem~\ref{theoremMK}}

\begin{proofof}{Theorem~\ref{theoremMK}}

Let us begin with the case where $f$ and $g$ can be written as sums of unitary masses (Equation~\eqref{eq:unitary_masses}), and where $x_1,\dots x_P$ and $y_1,\dots y_P$ are pairwise different.
Proposition~{\ref{prop1}} and  Corollary~{\ref{corollaire1}} show that if the ground cost $c$ can be written $c(x,y) = h(d(x,y))$ with $h$ a positive, convex and increasing function, we can choose some optimal permutation $\sigma$ for which there is some point $x_k$ which is not contained in any  path  of $\sigma$ (recall that paths are defined as open: they do not contain their boundaries). Since all points are supposed pairwise different, the only path meeting all the neighborhoods of $x_k$ is $\gamma_k$. It follows that there exists some open set on one side of $x_k$  and not containing $x_k$ which does not cross any path of the optimal permutation $\sigma$. The middle $x$ of this open set is not contained in any path of $\sigma$. We can thus cut the circle $S^1$ at $x$ and reduce the transportation problem on the circle to the transportation problem on the real line. The optimal permutation $\sigma$ is thus given by the sorting of the points (formula~(72) in~\cite{ref:villani03book}), taking $x$ as the reference point on the circle. This means that when  points are pairwise different, we have
\begin{equation}
  \label{eq:MKresult2}
  \textsc{MK}_{c}\,(f,g)=\inf_{x \in S^1} \int_0^1 h(| F_x^{-1}-G_x^{-1} |) ,
\end{equation}
where $F_x^{-1}$ and $G_x^{-1}$ are the pseudo-inverses  (pseudo-inverses are defined in Section~\ref{sec:computing_Monge_circle}) of the increasing functions $F_x$ and $G_x$ defined in Equation~\eqref{eq:cumulative}. 

Now, observe that $F_x$ and $G_x$ are horizontal translations of $F-F(x)$ and $G-G(x)$ by the same vector $x$. In consequence,
\begin{equation}
 \int_0^1 h(| F_x^{-1}-G_x^{-1} |)  = \int_0^1 h(| (F-F(x))^{-1}-(G-G(x))^{-1} |).
\end{equation}
Since $F$ and $G$ have been defined on $\R$ such that for all $y$, $F(y+1) = F(y)+1$ and $G(y+1) = G(y)+1$, the bounds of this integral can be replaced by any bounds $(t,t+1)$. It follows that 
\begin{eqnarray*}
 \int_0^1 h(| (F-F(x))^{-1}-(G-G(x))^{-1} |) &=&  \int_{-F(x)}^{1-F(x)} h(| (F-F(x))^{-1}-(G-G(x))^{-1} |)\\
&=& \int_{0}^{1} h(| (F)^{-1}-(G+F(x)-G(x))^{-1} |).
\end{eqnarray*}
Finally,
\begin{equation}
 \textsc{MK}_{c}\,(f,g) =\inf_{x \in S1}  \int_01 h(| (F)^{-1}-(G+F(x)-G(x))^{-1} |).
\end{equation}
In order to conclude, notice that the function $\varphi : \alpha \mapsto \int_01 h(| (F)^{-1}-(G+\alpha)^{-1} |)$ is continuous ($h:\R \to \R^+$ is continuous since it is convex) and coercive ( $\varphi(\alpha) \rightarrow +\infty$ when $|\alpha| \rightarrow +\infty$). It follows that $\varphi$ reaches its minimum at a point $\alpha_0 \in \R$.  In addition, the fact that $F$ and $G$ are piecewise constant implies that $\varphi$ is piecewise affine, with discontinuities of $\varphi'$ at points $F(x)-G(x)$. Thus,
\begin{equation}
  \label{eq:MKresult_final}
\textsc{MK}_{c}\,(f,g)  =\inf_{\alpha \in \R} \int_01 h(| (F)^{-1}-(G+\alpha)^{-1} |).
\end{equation}
\medskip

The previous result can be generalized to the case where the points  $x_i$, $y_j$ may coincide just by remarking that both quantities in Equation \eqref{eq:MKresult_final} are continuous in the positions of these points. In consequence, the result holds for distributions with rational masses.

In order to generalize the result to any couple of discrete probability distributions, observe that the right term in Equation~\eqref{eq:MKresult_final} is continuous in the values of the masses $f[i]$ and $g[j]$. As for the continuity of $\textsc{MK}_c(f,g)$, assume that a mass $\eps$ of the distribution $f$ is transferred from the point $x_{i_0}$ to the point $x_{i_1}$ in $f$, and let us call the new distribution $f^{\eps}$. If $(\alpha)$  is an optimal transport plan between $f$ and $g$, let $j_0$ be an index such that $\alpha_{i_0,j_0} \geq \eps$. A transport plan $(\alpha')$ between $f^{\eps}$ and $g$ can be defined as
\begin{itemize}
\item[$\bullet$] $\alpha'_{i_0,j_0} = \alpha_{i_0,j_0} - \eps$,
\item[$\bullet$] $\alpha'_{i_1,j_0} = \alpha_{i_1,j_0} + \eps$,
\item[$\bullet$]  $\alpha'_{i,j} = \alpha_{i,j}$ for $(i,j) \neq (i_0,j_0),  (i_1,j_0)$.
\end{itemize}
The corresponding transportation cost between $f^{\eps}$ and $g$ is then lower than $\textsc{MK}_c (f,g)+\eps h(\frac 1 2)$, which implies that $\textsc{MK}_c (f^{\eps},g)\leq \textsc{MK}_c (f,g)+\eps h(\frac 1 2)$. Conversely, we can show that $\textsc{MK}_c (f,g)\leq \textsc{MK}_c (f^{\eps},g)+\eps h(\frac  1 2)$.
\end{proofof}

\end{normalsize}
\end{appendix}


\bibliographystyle{alpha}  
\bibliography{biblio_yann}

\end{document}